\def\R{\ensuremath{\mathbb{R}}}
\pgfplotsset{compat=1.13}
\theoremstyle{plain}
\newtheorem{thm}{Theorem}[section]
\newtheorem{obs}[thm]{Observation}
\newtheorem{lemma}[thm]{Lemma}
\newtheorem{prop}[thm]{Proposition}
\newtheorem{question}[thm]{Question}
\theoremstyle{definition}
\newtheorem{defn}[thm]{Definition}
\newtheorem{exm}[thm]{Example}
\theoremstyle{remark}
\newtheorem{rmk}[thm]{Remark}
\title{Negative Curvature in Locally Reducible Artin Groups}
\author{Jill Mastrocola}
\date{}
\begin{document}
	
\begin{abstract}
	In this paper, we define the 2-complete Artin complex and show that it is systolic for locally reducible Artin groups. The stabilizers of simplices in this complex are exactly the proper parabolic subgroups which are "2-complete." We use this systolicity to show that parabolic subgroups, with 2-completions that are not the whole Artin group, are weakly malnormal. This allows us to conclude that many locally reducible Artin groups are acylindrically hyperbolic. 
\end{abstract}

\maketitle

\section{Introduction}

Geometric group theory uses geometric and topological methods to study finitely generated groups. If one considers the action of a group on a space satisfying certain geometric properties, one can often recover algebraic properties of the group. Artin groups, one class of particular interest, are closely related to the relatively well-studied Coxeter groups and mapping class groups. Much of the successful study of Artin groups has been through the action of the group on a negatively or nonpositively curved space. In this paper, we study a subclass known as locally reducible Artin groups by defining a new nonpositively curved complex and using it to show many Artin groups within this class are acylindrically hyperbolic.

Artin groups were introduced in the 1960's by Tits as an extension of Coxeter groups. There are many long-standing open questions and conjectures about Artin groups. For example, it is unknown whether Artin groups are torsion-free, have solvable word problem, or satisfy the famous $K(\pi, 1)$-conjecture. Much progress has been made for particular subclasses of Artin groups, but as a whole they remain quite mysterious. 

An \textbf{Artin group} is a group with a presentation of the form $$\langle s_1, \dotsc, s_n \ |\ \underbrace{s_i s_j s_i\dotsc}_{m_{ij}} = \underbrace{s_j s_i s_j \dotsc}_{m_{ij}} \rangle$$ for some finite set of generators $S = \{s_1, \dotsc, s_n\}$ where $m_{ij}\in\{2, 3, \dotsc, \infty \}$ for each $1\leq i < j \leq n$. If $m_{ij} = \infty$, then $s_i$ and $s_j$ have no relation. We can encode the same information in a \textbf{defining graph}, denoted $\Gamma$, with vertices $V(\Gamma) = S$. If $m_{ij}<\infty$, then $\Gamma$ has an edge between vertices $s_i$ and $s_j$ labeled by the integer $m_{ij}$. An Artin group with defining graph $\Gamma$ is denoted $A_\Gamma$. 

To each Artin group there is an associated \textbf{Coxeter group}, $W_\Gamma$, which is the quotient of $A_\Gamma$ defined by adding the relations $s_i^2 = 1$ for all $1\leq i \leq n$. If $W_\Gamma$ is a finite group, we say that $A_\Gamma$ is \textbf{of finite type}. Note that $A_\Gamma$ will always be an infinite group. 

For a subset of generators $T\subseteq S$, the associated \textbf{standard parabolic subgroup} of $A_\Gamma$ is the subgroup generated by $T$. Van der Lek \cite{van_der_lek_homotopy_1983} proved that this subgroup is isomorphic to the Artin group determined by the full subgraph of $\Gamma$ with vertices $T$, which we will denote $A_T$. Any conjugate of such a subgroup is called a \textbf{parabolic subgroup}. 

Among the commonly studied subclasses of Artin groups, finite type Artin groups and right-angled Artin groups (those for which $m_{ij} = 2$ or $\infty$ for all $i,j$) have seen the most progress. In the finite type case, an Artin group admits a Garside structure, which provides normal forms and combinatorial structure for studying the group. Right-angled Artin groups are examples of graph products of groups. They have algorithmic properties and admit a CAT(0) cubical structure. 

The class of 2-dimensional Artin groups has also seen significant progress. (The dimension of an Artin group is the maximum cardinality of a subset $T\subseteq V(\Gamma)$ such that $A_T$ is finite type.) By groundbreaking work of Charney and Davis, 2-dimensional Artin groups are torsion-free and satisfy the $K(\pi,1)$-conjecture \cite{charney_davis_1995}. They are also known to have solvable word problem by work of Chermak \cite{chermak_1998}. More recently Cumplido, Martin, and Vaskou have developed a rich study of large-type Artin groups (those for which $m_{ij}\geq 3$ for all $i,j$) and have proven many long-standing conjectures about them: the intersection of a family of parabolic subgroups is itself a parabolic subgroup \cite{cmv_large_2023}, automorphisms are classified, and \cite{vaskou_automs_large} the class of large-type Artin groups is rigid under isomorphism \cite{martin_vaskou_isom_problem_large}.

Our focus in this paper is the following class of Artin groups. They were introduced by Charney, who showed they satisfy both the $K(\pi, 1)$-Conjecture and the Tits Conjecture \cite{charney_tits_2000}. 

\begin{defn}
	An Artin group $A_\Gamma$ is \textbf{locally reducible} if the only finite type triangles contained in $\Gamma$ are of the form $2-2-k$ for some $k\geq 2$. 
\end{defn}

Within this class of Artin groups, the only finite type parabolic subgroups which occur are cyclic subgroups, dihedral subgroups (i.e., $A_T$ for $T = \{a,b\}$ where $m_{ab} \geq 3$), and direct products of cyclic and dihedral subgroups.

The notion of hyperbolic groups was introduced by Gromov in the 1980's \cite{gromov_hyperbolic_1987}, and sparked much of the field of geometric group theory as we know it today. The action of a group on a hyperbolic or nonpositively curved space has proven to be a very useful tool in the field, especially when the action is proper and cocompact. More recently, there have been many attempts at generalizing hyperbolic groups. In this paper, we focus on acylindrical hyperbolicity. 

A group action of $G$ on a metric space $(X,d)$ is \textbf{acylindrical} if for every $\epsilon\geq0$, there exist constants $R\geq 0$ and $N\geq 0$ such that for all points $x,y\in X$ such that $d(x,y) \geq R$, $|\{ g\in G\ |\ d(x,gx)\leq \epsilon \text{\ and\ } d(y, gy)\leq \epsilon \}|\leq N$. A group is \textbf{acylindrically hyperbolic} if it is not virtually cyclic and has an acylindrical action on a hyperbolic space. 

Informally, acylindricity can be interpreted as a sort of properness on pairs of points that are far apart in the space. The original definition of acylindrical hyperbolicity goes back to Sela \cite{sela_acyl}, which applies to group actions on trees. The more general and current definition is due to Bowditch \cite{bowditch_acy_hyp} and was shown by Osin \cite{osin_acy_hyp} to be equivalent to other conditions studied in \cite{bestvina_fujiwara_bounded_cohom, hamenstadt_bounded_cohom, sisto_contracting_elts}.

Acylindrically hyperbolic groups include all but finitely many mapping class groups \cite{bowditch_acy_hyp, masur_minsky}, $Out(F_n)$ for $n\geq 2$ \cite{bestvina_feighn}, and many CAT(0) groups \cite{sisto_contracting_elts}. Certain classes of Artin groups are known to be acylindrically hyperbolic, including the following classes, provided the group is irreducible, not cyclic, and of sufficient rank: 
\begin{itemize}
	\item right-angled Artin groups \cite{caprace_sageev, osin_acy_hyp, sisto_contracting_elts}
	\item Artin groups whose defining graph is not a join \cite{charney_morriswright_acy_hyp, chatterji_martin}
	\item XXL Artin groups (all $m_{ij}\geq 5$) \cite{haettel_xxl}
	\item triangle-free Artin groups \cite{kato_oguni_tri_free}
	\item Artin groups of euclidean type \cite{calvez_euclidean_acy_hyp}
	\item 2-dimensional Artin groups \cite{martin_przytycki_acy_hyp, vaskou_acy_hyp_2diml}
\end{itemize}

In this paper, we show that many locally reducible Artin groups are acylindrically hyperbolic. For $A_\Gamma$ an arbitrary Artin group, define $\widehat{\Gamma}$ to be the graph obtained from $\Gamma$ by deleting all edges not labeled by 2. We consider a single vertex of $\widehat{\Gamma}$ with no edges to be its own connected component. A parabolic subgroup of $A_\Gamma$ is \textbf{2-complete} if it can be written as $gA_Tg^{-1}$ for some $g\in A_\Gamma$ and some $T\subseteq V(\Gamma)$ where $T$ is a union of connected components of $\widehat{\Gamma}$. A \textbf{2-completion} of a parabolic subgroup $P$ is a 2-complete parabolic subgroup which contains $P$. Note that 2-completions are not unique (see Remark \ref{canonical 2-completion}).

\begin{thm}\label{acy hyp} Let $A_\Gamma$ be an Artin group. If either
	\begin{enumerate} 
		\item $A_\Gamma$ is locally reducible and has a maximal finite-type subgroup which is dihedral and which has a 2-completion that is not all of $A_\Gamma$, or
		\item $A_\Gamma$ splits as an amalgamated product $A_{\Gamma_1} \ast_{A_{\Gamma_1 \cap \Gamma_2}} A_{\Gamma_2}$ such that $A_{\Gamma_1}$ is locally reducible and there is a 2-completion of $A_{\Gamma_1 \cap \Gamma_2}$ which does not contain all of $A_{\Gamma_1}$,
	\end{enumerate}
then $A_\Gamma$ is acylindrically hyperbolic.
\end{thm}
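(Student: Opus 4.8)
The plan is to establish acylindrical hyperbolicity by producing, in each case, an action on a hyperbolic space satisfying the criterion of Osin, via the machinery of weak malnormality announced in the abstract. The abstract promises that parabolic subgroups whose 2-completions are not all of $A_\Gamma$ are weakly malnormal, so the central task reduces to feeding such a subgroup into a known acylindricity-from-malnormality result. Recall a subgroup $H \leq G$ is \emph{weakly malnormal} if there exists $g \in G$ with $|H \cap gHg^{-1}| < \infty$; the standard tool (cf.\ the work relating malnormal quasiconvex subgroups to hyperbolic embeddings) is that if $G$ acts on a hyperbolic space and contains a weakly malnormal subgroup that is itself sufficiently rich, one obtains a hyperbolically embedded subgroup and hence acylindrical hyperbolicity. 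I would first isolate precisely which earlier result of the paper (the systolicity of the 2-complete Artin complex, and the weak malnormality it yields) supplies the weakly malnormal parabolic, and then verify its hypotheses in each of the two cases.

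For case (1), I would take the maximal finite-type dihedral subgroup $A_T$ with $T = \{a,b\}$, $m_{ab} \geq 3$, whose 2-completion is a proper subgroup of $A_\Gamma$. First I would confirm that $A_\Gamma$ is not virtually cyclic: since $A_\Gamma$ is locally reducible and contains a dihedral piece sitting inside something strictly larger, the group is not virtually cyclic (a dihedral Artin group already contains a free subgroup or a $\Z^2$, and the properness of the 2-completion forces additional generators). Next, the dihedral $A_T$ — being a proper parabolic with proper 2-completion — is weakly malnormal by the paper's main malnormality result; I would verify that its normalizer-type condition is met so that the infinite order central-quotient element of the dihedral group acts as a loxodromic with the weak proper discontinuity needed. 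The systolic action of $A_\Gamma$ on the 2-complete Artin complex, together with weak malnormality, then yields a hyperbolically embedded virtually cyclic subgroup, and Osin's characterization gives acylindrical hyperbolicity.

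For case (2), I would exploit the amalgamated product structure $A_\Gamma = A_{\Gamma_1} \ast_{A_{\Gamma_1 \cap \Gamma_2}} A_{\Gamma_2}$ directly. The edge group $A_{\Gamma_1 \cap \Gamma_2}$ has a 2-completion not containing all of $A_{\Gamma_1}$, which is exactly the hypothesis ensuring, via the locally reducible structure of the vertex group $A_{\Gamma_1}$ and the paper's malnormality theorem applied inside $A_{\Gamma_1}$, that the edge group is weakly malnormal in $A_{\Gamma_1}$ and hence behaves as a weakly malnormal edge group in the full amalgam. I would then invoke the standard Bass–Serre / acylindricity criterion for amalgamated products: a nondegenerate splitting over a weakly malnormal edge group, where the ambient group is not virtually cyclic and the splitting is not a purely trivial one, produces an acylindrical action on the associated Bass–Serre tree (in the sense of Sela), yielding acylindrical hyperbolicity. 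The degeneracy checks — that $A_{\Gamma_1}$ strictly contains the edge group and that $A_{\Gamma_2}$ does too, so the tree is unbounded and the action is nonelementary — follow from the properness of the 2-completion.

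The main obstacle I anticipate is verifying the \emph{weak} proper discontinuity (or equivalently the finite-index/finite-intersection) condition precisely enough to invoke Osin's criterion, rather than merely exhibiting a weakly malnormal subgroup: one must produce a genuinely loxodromic element with WPD, which requires controlling the intersections of conjugates of the parabolic along a geodesic in the systolic complex. I expect the systolicity established earlier to do the heavy lifting here — systolic complexes are known to yield contracting geodesics and the finiteness of relevant simplex stabilizers — so the real work is translating the combinatorial weak malnormality statement into the dynamical WPD statement. A secondary subtlety is ensuring $A_\Gamma$ is not virtually cyclic in every sub-case, which I would handle uniformly by noting that any group admitting a nonelementary acylindrical action is automatically not virtually cyclic, reducing this to the nonelementarity already needed for the action.
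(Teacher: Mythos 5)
Your treatment of case (2) is essentially the paper's: the edge group $A_{\Gamma_1\cap\Gamma_2}$ is weakly malnormal in $A_{\Gamma_1}$ by Theorem \ref{weakly malnormal}, hence in all of $A_\Gamma$ (the witnessing element survives the inclusion), and the paper then quotes the Minasyan--Osin criterion for nondegenerate amalgams over weakly malnormal edge groups (Theorem \ref{acy_hyp_amalg}), which is the result you describe; virtual cyclicity is excluded exactly as you suggest, because the splitting is nondegenerate.

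Case (1), however, has a genuine gap, and you flag it yourself in your ``main obstacle'' paragraph: you never actually exhibit the space, the criterion, or the WPD loxodromic that converts weak malnormality of the dihedral subgroup into acylindrical hyperbolicity. The paper does not work with the systolic complex $\widehat{X_\Gamma}$ at this stage at all. Instead it applies Martin's Theorem B (Theorem \ref{martin criterion}) to the \emph{Deligne complex} $D_\Gamma$, which is CAT(0) because $A_\Gamma$ is locally reducible (Theorem \ref{deligne cplx CAT(0) for LR}). The vertex is $v = A_{\{a,b\}}$: condition (2) of Martin's criterion is the weak malnormality supplied by Theorem \ref{weakly malnormal}, and condition (1) --- unboundedness of the $\mathrm{Stab}(v)$-orbits on $Lk_{D_\Gamma}(v)$ in the angular metric --- is witnessed by the path $\{g_i A_\emptyset\}_{i\geq 0}$ with $g_i = abab\cdots$ of length $i$, which is unbounded by a lemma of Vaskou; this is where the maximality of the dihedral subgroup enters, since it controls the structure of the link. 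Your proposed substitute (a hyperbolically embedded virtually cyclic subgroup generated by an element of the dihedral group, with WPD extracted from systolicity of $\widehat{X_\Gamma}$) is not carried out and would face real difficulties: $A_{\{a,b\}}$ is generally not a simplex stabilizer of $\widehat{X_\Gamma}$ (those are the $2$-complete parabolics), and systolicity of a complex with infinite simplex stabilizers does not by itself produce a loxodromic WPD element. The missing idea is precisely the packaged CAT(0) criterion of Martin applied to the Deligne complex rather than to the $2$-complete Artin complex.
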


The following theorem is the key ingredient to this result. This allows us to apply work of Martin \cite{martin_acy_hyp} and Minasyan and Osin \cite{minasyan_osin_acy_hyp_amalg} to conclude that many Artin groups which are locally reducible are acylindrically hyperbolic. 

\begin{thm}\label{weakly malnormal}
	Let $A_\Gamma$ be a locally reducible Artin group such that $\widehat{\Gamma}$ has at least two connected components. Then any parabolic subgroup $P$ of $A_\Gamma$ which has a 2-completion that is not all of $A_\Gamma$ is weakly malnormal (i.e., there exists $g\in G$ such that $|P\cap gPg^{-1}|<\infty$). 
\end{thm}

As is a common theme in the study of Artin groups, nonpositively curved geometric complexes are a key component of these results. For locally reducible Artin groups, it is known that the Deligne complex is CAT(0) \cite{charney_tits_2000}. In Section \ref*{Defn of the 2-complete Artin Complex Section}, we introduce the 2-complete Artin complex and show that it satisfies a combinatorial version of nonpositive curvature known as systolicity. 

Intuitively, a geodesic metric space is said to be CAT(0) if its triangles are “at least as thin as” comparison triangles in $\R^2$. In the case of a cubical complex with the standard Euclidean metric, there is a simple link condition to check that the complex is CAT(0). However, when dealing with a different metric on a cube complex, or in the setting of a simplicial complex, CAT(0)-ness can be quite hard to check, especially in high dimensions. Systolicity was introduced as an analogue of CAT(0)-ness for simplicial complexes by Januszkiewicz and \'{S}wi\k{a}tkowski \cite{januszkiewicz_swiatkowski_2006} and independently by Haglund \cite{haglund_systolicity}. Systolicity shares many of the same nice consequences of CAT(0)-ness, and it can always be checked in a combinatorial way. See Section \ref{2-complete Artin Complex is Systolic Section} for a precise definition. The following result is the key to proving Theorem \ref{weakly malnormal}. 

\begin{thm}\label{2-complete Artin cplx is systolic}
	Let $A_\Gamma$ be a locally reducible Artin group. If there are at least three connected components in $\widehat{\Gamma}$, then the 2-complete Artin complex of $A_\Gamma$ is systolic. 
\end{thm}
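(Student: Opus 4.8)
The plan is to verify the three defining conditions of systolicity: connectedness, simple connectedness, and local $6$-largeness (the link of every simplex is a flag complex containing no full cycle of length $4$ or $5$). I would run the whole argument as an induction on the number $c$ of connected components of $\widehat{\Gamma}$, with base case $c=3$. The organizing observation is that the $2$-complete Artin complex is self-similar: since a proper $2$-complete parabolic containing a conjugate of $A_{S\setminus C_i}$ corresponds to a union of components of $\widehat{\Gamma}$ avoiding $C_i$, the link of a vertex stabilized by $A_{S\setminus C_i}$ is naturally isomorphic to the $2$-complete Artin complex of $A_{S\setminus C_i}$. This group is again locally reducible (local reducibility passes to full subgraphs), and its defining graph has $c-1$ components, so the inductive hypothesis applies to each vertex link.

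For connectedness and simple connectedness I would invoke the standard coset-complex criterion: the complex is simply connected exactly when $A_\Gamma$ is the colimit of its proper $2$-complete standard parabolics along their intersections. Here the hypothesis $c\geq 3$ is what makes this go through. Each defining relator of $A_\Gamma$ involves only two generators $s_i,s_j$, and $\{s_i,s_j\}$ meets at most two components of $\widehat{\Gamma}$; when $c\geq 3$, the union of those (at most two) components is a \emph{proper} union, so the relator already holds inside a proper $2$-complete parabolic. Hence every relation of $A_\Gamma$ is a consequence of relations internal to proper parabolics, which is precisely the colimit condition. This is also exactly where $c=2$ breaks down, since then a cross-component relation would involve all of $S$.

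The main work, and the expected obstacle, is local $6$-largeness. It suffices to show each vertex link is $6$-large: the self-similar identification then upgrades this to local $6$-largeness for all simplices, because for $c\geq 4$ the vertex links are systolic by induction (hence both $6$-large and locally $6$-large), while for $c=3$ the vertex links are graphs, for which girth at least $6$ automatically yields local $6$-largeness. The crucial base case is therefore that the $2$-complete Artin complex of a locally reducible Artin group with exactly two components $C_1,C_2$ is a $6$-large graph. This graph is bipartite between cosets of $A_{C_1}$ and cosets of $A_{C_2}$, so it has no odd cycles, and every $4$-cycle is automatically full; thus it suffices to rule out $4$-cycles. A $4$-cycle is precisely two distinct cosets of $A_{C_1}$ each meeting two distinct cosets of $A_{C_2}$, i.e. a short relation alternating between $A_{C_1}$ and $A_{C_2}$. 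I would analyze such a relation by passing to the finite-type parabolics that straddle the two components: since no edge between $C_1$ and $C_2$ is labeled $2$, every straddling dihedral parabolic has $m_{ij}\geq 3$, and local reducibility forces all higher straddling pieces to be products of such dihedral subgroups with cyclic ones (the $2$-$2$-$k$ configurations). The minimal alternating relation then has syllable length $2m_{ij}\geq 6$, so no $4$-cycle occurs.

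Finally I would assemble the pieces: connectedness and simple connectedness from the colimit argument, and local $6$-largeness from the base-case girth bound together with the inductive systolicity of vertex links. The heart is the base-case girth computation, and the whole proof rests on the two structural inputs, that inter-component edges carry labels at least $3$ (so commuting relations are absorbed into vertex stabilizers rather than producing short full cycles) and that local reducibility confines all finite-type interaction to $2$-$2$-$k$ products. The most delicate point to get right will be the bookkeeping of which conjugates of the $A_{C_i}$ can simultaneously meet a given pair of cosets, that is, controlling the relevant intersections and double cosets, since that is what genuinely prevents short full cycles.
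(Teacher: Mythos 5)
Your overall architecture matches the paper's: induction on the number $c$ of components of $\widehat{\Gamma}$, links identified with $2$-complete Artin complexes of fewer components, simple connectedness for $c\geq 3$ via the complex-of-groups development (the paper cites Bridson--Haefliger Prop.~12.20 where you give the colimit criterion directly), and the whole weight resting on the two-component base case, where the complex is a bipartite graph and one must exclude $4$-cycles. Your reduction of a $4$-cycle to a mixed identity $t_1u_1=u_2t_2$ with $t_i\in A_{C_1}$, $u_i\in A_{C_2}$ is exactly the paper's Lemma~\ref{TU not equal UT}.

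The genuine gap is that you do not actually prove this mixed identity is impossible. Your sketch --- ``passing to the finite-type parabolics that straddle the two components \dots the minimal alternating relation then has syllable length $2m_{ij}\geq 6$'' --- presupposes some normal form or small-cancellation structure for words alternating between $A_{C_1}$ and $A_{C_2}$, but $A_\Gamma$ is \emph{not} an amalgam or graph product of these two parabolics: the generators of $C_1$ and $C_2$ satisfy genuine relations (all the edges of $\Gamma$ labeled $\geq 3$), and $t_1,u_1$ are arbitrary elements of possibly large parabolic subgroups, not single letters. There is no off-the-shelf ``syllable length'' argument here; establishing that $A_{C_1}A_{C_2}\cap A_{C_2}A_{C_1}$ contains no nontrivial mixed products is precisely the hard technical content. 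The paper closes this gap geometrically rather than combinatorially: such an identity would produce a geodesic quadrilateral in the Deligne complex $D_\Gamma$ (CAT(0) for locally reducible $A_\Gamma$ by Theorem~\ref{deligne cplx CAT(0) for LR}) with vertices $A_\emptyset$, $u_1A_\emptyset$, $t_2A_\emptyset$, $t_1u_1A_\emptyset$; convexity of $D_T$ and $D_U$ (Lemma~\ref{Deligne complex embeds}) plus the fact that no finite-type clique meets two components of $\widehat{\Gamma}$ (Proposition~\ref{connected components of Gamma hat}) force three of the interior angles to be at least $\pi-\pi/m_{tu}\geq 2\pi/3$ in the Moussong metric, contradicting the flat quadrilateral theorem. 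Some input of this geometric kind (or a genuinely new algebraic argument) is needed; as written, your base case asserts the key fact rather than establishing it.
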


We begin by recalling the definitions of the well-known Delgine and Artin complexes (Section \ref{Deligne and Artin Complexes Section}). We then define the 2-complete Artin complex (Section \ref{Defn of the 2-complete Artin Complex Section}) and show that it is systolic (Section \ref{2-complete Artin Complex is Systolic Section}). We prove Theorem \ref{weakly malnormal} (Section \ref{Weakly Malnormal Section}) and apply theorems of Martin and Minasyan-Osin to show that many locally reducible Artin groups are acylindrically hyperbolic (Section \ref{Acy Hyp Section}). Finally, we discuss some potential future directions related to this work (Section \ref{Future Directions Section}).

\subsection*{Acknowledgements}

I would like to thank my advisor Ruth Charney, who introduced me to the subject of geometric group theory and has supported me throughout my graduate school journey. None of this work would have been possible without her guidance and advice along the way. I also want to thank Alex Martin, who offered helpful motivations and suggestions early on, and has supported my work throughout. Special thanks go to Carolyn Abbott and Jason Behrstock, who both provided helpful feedback on my results and my writing.

\section{Complexes Associated to Artin Groups}\label{Deligne and Artin Complexes Section}

\subsection{The Deligne Complex} 
The Deligne complex for an arbitrary Artin group $A_\Gamma$ with generating set $S$ was introduced by Charney and Davis \cite{charney_davis_1995}, generalizing the work of Deligne \cite{deligne_1972}, who defined such a complex for finite type Artin groups. Let $\mathcal{S}^f = \{T\subseteq S\ |\ A_T \text{ is finite type} \}$. We include $\emptyset \in \mathcal{S}^f$ with the standard convention that $A_\emptyset = \{ 1\}$. Note that $\mathcal{S}^f$ is a partially ordered set with respect to inclusion. 
	
\begin{defn}
	The \textbf{Deligne complex} of $A_\Gamma$, denoted $D_\Gamma$, is the cubical complex whose vertices correspond to cosets $gA_T$ for $g\in A_\Gamma$ and $T\in \mathcal{S}^f$ and whose $k$-dimensional cubes correspond to intervals of inclusion $[gA_R, gA_T]$ of length $k+1$. 
\end{defn}

The Artin group $A_\Gamma$ acts by left multiplication on $D_\Gamma$. The stabilizer of a vertex $gA_T$ is the parabolic subgroup $gA_T g^{-1}$. A fundamental domain for the action is the subcomplex $K_{D_\Gamma}$ of $D_\Gamma$ spanned by vertices of the form $A_T$ for $T\in \mathcal{S}^f$. 

We will now define the Moussong metric for the Deligne complex. We will start by describing the Moussong metric in its original setting: the Davis complex \cite{moussong_1988}. This complex associated to the Coxeter group $W_\Gamma$ is defined analogously to the Deligne complex. We consider the same set $\mathcal{S}^f$ as above. 

\begin{defn}
	The \textbf{Davis complex} of $W_\Gamma$, denoted $C_\Gamma$, is the cubical complex whose vertices correspond to cosets $wW_T$ for $w\in W_\Gamma$ and $T\in \mathcal{S}^f$ and whose $k$-dimensional cubes correspond to intervals of inclusion $[wW_R, wW_T]$ of length $k+1$. 
\end{defn}

Similarly to above, $W_\Gamma$ acts by left multiplication on $C_\Gamma$ and has a fundamental domain $K_{C_\Gamma}$ spanned by vertices of the form $W_T$. 

When $W_\Gamma$ is finite, it can be viewed as a group of orthogonal transformations of $\R^n$. The generators $s\in S$ act as reflections across the walls of a simplicial cone $Z\subseteq \R^n$. There is a unique point $x_\emptyset \in Z\cap \mathbb{S}^{n-1}$ which is equidistant from each wall of $Z$. The convex hull $X$ of the orbit of $x_\emptyset$ under the action of $W_\Gamma$ is isomorphic to the Davis complex $C_\Gamma$. The origin in $\R^n$ corresponds to the vertex $W_S$ in $C_\Gamma$ and $x_\emptyset$ in $\R^n$ corresponds to the vertex $W_\emptyset$ in $C_\Gamma$. The fundamental domain $K_{C_\Gamma}$ is identified with the intersection $X\cap Z$. This isomorphism gives a metric on $C_\Gamma$; in this case $C_\Gamma$ is called the \textbf{Coxeter cell} of $W_\Gamma$. 

If $W_\Gamma$ is infinite, we can cover $C_\Gamma$ by the closed stars of vertices of the form $wW_T$ for $T$ maximal elements of $\mathcal{S}^f$. Since $W_T$ is finite, a star corresponding to a vertex of this form is isomorphic to the Coxeter cell of $W_T$ and can be given the metric of that Coxeter cell. These Coxeter cells together give a piecewise Euclidean metric on $C_\Gamma$, called the \textbf{Moussong metric}. Moussong proved that $C_\Gamma$ with this metric is CAT(0) for all $W_\Gamma$ \cite{moussong_1988}.

For an Artin group $A_\Gamma$, the quotient map to its associated Coxeter group $A_\Gamma\to W_\Gamma$ induces an equivariant projection $D_\Gamma \to C_\Gamma$, which is an isomorphism when restricted to the fundamental domain $K_{D_\Gamma}$. We define the Moussong metric on the Deligne complex $D_\Gamma$ to be the piecewise Euclidean metric defined on each translate of $K_{D_\Gamma}$ by letting $K_{D_\Gamma}$ be isometric to $K_{C_\Gamma}$. 

\begin{thm}\cite[Theorem 3.2]{charney_tits_2000} \label{deligne cplx CAT(0) for LR}
If $A_\Gamma$ is locally reducible, then $D_\Gamma$ with the Moussong metric is CAT(0). 
\end{thm}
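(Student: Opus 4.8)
The plan is to deduce global nonpositive curvature from local data via the Cartan--Hadamard theorem, so the first step is to record that $D_\Gamma$ with the Moussong metric is a complete geodesic space. Because there are only finitely many subsets $T\in\mathcal{S}^f$, and $A_\Gamma$ acts transitively on the vertices of each fixed type $T$ (the stabilizer of $A_T$ being $A_T$ itself), the complex has only finitely many isometry types of cells; by Bridson's theorem on piecewise Euclidean complexes with finitely many shapes, it is therefore complete and geodesic. The Cartan--Hadamard theorem for geodesic spaces then reduces the statement to two essentially independent claims: that $D_\Gamma$ is simply connected, and that it is locally CAT(0).

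First I would dispose of simple connectivity, which holds for the Deligne complex of an \emph{arbitrary} Artin group and uses nothing about local reducibility. The cleanest route is a van Kampen argument over the cover of $D_\Gamma$ by the closed stars of its maximal-type vertices: each such star is (a translate of) a single Coxeter cell, hence convex and simply connected, and one checks that the pattern of intersections is connected and that the incident vertex stabilizers generate $A_\Gamma$, forcing $\pi_1(D_\Gamma)=1$. The real content lies in the second claim, local CAT(0)-ness.

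By Gromov's link condition, a piecewise Euclidean complex is locally CAT(0) exactly when the link of every vertex, with its induced piecewise spherical metric, is CAT(1). By the transitivity noted above it suffices to verify this for the fundamental-domain vertices $A_T$, $T\in\mathcal{S}^f$. I would analyze $\mathrm{Lk}(A_T)$ by separating its descending part (directions toward cosets $gA_R$ with $R\subsetneq T$, which assembles the Coxeter-cell geometry of the finite-type group $A_T$) from its ascending part (directions toward the standard vertices $A_U$ with $T\subsetneq U\in\mathcal{S}^f$). When $T$ is maximal the ascending part is empty and the link is a spherical Coxeter complex of the finite group $W_T$, which is CAT(1); this is precisely the finite-type case of Moussong's theorem, already available to us.

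The main obstacle, and the only place where local reducibility is genuinely used, is the CAT(1)-ness of the links at non-maximal $T$. Here I would invoke Moussong's Lemma: a piecewise spherical complex all of whose edges have length at least $\pi/2$ and which is a metric flag complex is CAT(1). The edge-length bound is immediate from the Moussong metric together with $m_{ij}\geq 2$. The flag condition is where the hypothesis enters: the simplices of these links are indexed by finite-type subsets, and local reducibility forces every finite-type parabolic to be a direct product of cyclic and dihedral subgroups. Consequently each Coxeter cell is a metric product of segments and regular polygons, so $\mathrm{Lk}(A_T)$ decomposes as a spherical join of the pieces coming from the irreducible, rank $\leq 2$ factors, and each such factor (a point, an $\mathbb{S}^0$, or a circle of circumference $\geq 2\pi$, together with the rank $\leq 1$ arcs in the ascending directions) is manifestly CAT(1). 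Since a spherical join of CAT(1) spaces is CAT(1), the join lemma finishes the verification; equivalently, local reducibility guarantees that no ``empty'' spherical simplex---which would correspond to an irreducible finite-type triangle---can appear, so the metric flag condition holds. I expect the bulk of the work, and the only real subtlety, to be making this join decomposition of $\mathrm{Lk}(A_T)$ precise and checking the flag condition uniformly over all non-maximal $T$.
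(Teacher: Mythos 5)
A preliminary remark: the paper does not prove this statement --- it is imported verbatim from Charney \cite[Theorem 3.2]{charney_tits_2000} --- so the comparison below is with the argument in that reference (which builds on Charney--Davis \cite{charney_davis_1995}). Your overall skeleton is the right one and matches Charney's: completeness and geodesicity from finitely many shapes, Cartan--Hadamard, simple connectivity of $D_\Gamma$ for arbitrary Artin groups, Gromov's link condition, and the use of local reducibility to split each link as an orthogonal join of rank $\leq 2$ pieces, with Moussong's Lemma handling the ascending directions (this last part is exactly why the link of $A_\emptyset$, whose $1$-skeleton is $\Gamma$ with edge lengths $\pi-\pi/m_{rs}\geq \pi/2$, causes no trouble).

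The genuine gap is that at the decisive step you have computed the links of the \emph{Davis} complex rather than the \emph{Deligne} complex. The descending part of $\mathrm{Lk}_{D_\Gamma}(gA_T)$ is the development of the poset of cosets $hA_R$ with $h\in A_T$ and $R\subsetneq T$; since every Artin group is infinite, this is an infinite complex whenever $T\neq\emptyset$, not the spherical Coxeter complex of the finite group $W_T$. Concretely, for a maximal dihedral $T=\{a,b\}$ with $m_{ab}=m\geq 3$, the link of $A_T$ is an infinite metric graph on the cosets $hA_\emptyset$, $hA_{\{a\}}$, $hA_{\{b\}}$ ($h\in A_T$), with edges of length roughly $\pi/(2m)<\pi/2$ coming from the central angles of the $2m$-gonal Coxeter cell --- it is neither the boundary $2m$-gon nor ``a circle of circumference $\geq 2\pi$,'' and Moussong's Lemma cannot be applied to it because of the short edges. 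Showing that this infinite graph has no closed geodesic of length less than $2\pi$ is the technical heart of the theorem: it requires a combinatorial lower bound on the number of edges in any embedded cycle of the coset graph of $A_{\{a\}}$ and $A_{\{b\}}$ inside the dihedral Artin group $A_{\{a,b\}}$, which is a genuine lemma about dihedral Artin groups (cf.\ the analogous use of dihedral coset combinatorics in Lemma \ref{TU not equal UT} above). Your proposal asserts this as an observation rather than proving it, and the same issue infects the descending factor of the join at every nonempty non-maximal $T$. Local reducibility enters exactly where you say it does --- it reduces each descending link to an orthogonal join of rank $1$ and rank $2$ factors --- but the rank $2$ factor is where the real work lives, and it is missing from your argument.
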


\subsection{The Artin Complex}

When Charney and Davis defined the Deligne complex in \cite{charney_davis_1995}, they also introduced the analogous Artin complex.

\begin{defn}
	The \textbf{Artin complex} associated to $A_\Gamma$, denoted $X_\Gamma$, is the simplicial complex with vertices corresponding to cosets of the form $gA_{S\setminus \{t\}}$ for $g\in A_\Gamma$ and $t\in S$. A set of vertices spans a simplex if the corresponding cosets have nonempty intersection. 
\end{defn}

Cumplido, Martin, and Vaskou showed that the Artin complex is systolic for Artin groups of large type \cite{cmv_large_2023}. Using the consequences of this geometry, they show that the intersection of parabolic subgroups of a large-type Artin group is a parabolic subgroup, solving a long-standing conjecture. The Artin complex was also studied by Godelle and Paris, who in particular showed that it is a flag complex \cite{godelle_paris_2012}. Their work has inspired a new approach to the $K(\pi, 1)$-conjecture by Huang \cite{huang_Kpi1_2023}. In this paper we define a modification of this complex, called the 2-complete Artin complex.

\section{The $2$-Complete Artin Complex}\label{Defn of the 2-complete Artin Complex Section}

Let $A_\Gamma$ be an Artin group with defining graph $\Gamma$, and let $v$ be a vertex in $\Gamma$. The \textbf{2-completion} of $v$ is the full subgraph of $\Gamma$ spanned by $v$ together with all vertices which can be reached from $v$ by an edge path labeled only by 2's. 

Let $\mathcal{S}^{(2)} = \{T\subseteq S\ |\ \text{if } v\in T \text{, then the 2-completion of } v \text{ is contained in } T \}$. This is a partially ordered set with respect to inclusion. If $\Gamma$ is as in Example \ref{2-complete Artin complex example}, then $T = \{a,b,c\}\in \mathcal{S}^{(2)}$ but $W = \{a,b,c,d\}\neq \mathcal{S}^{(2)}$. 

Let $\widehat{\Gamma}$ be the subgraph of $\Gamma$ which consists of all the vertices of $\Gamma$ and the edges of $\Gamma$ labeled by $2$. Connected components of $\widehat{\Gamma}$ correspond to non-empty, minimal elements of $\mathcal{S}^{(2)}$. We consider a single vertex in $\Gamma$ which has no edges labeled $2$ to be its own connected component in $\widehat{\Gamma}$. (See Example \ref{2-complete Artin complex example}.)

A subset $T\subseteq V(\Gamma)$ of generators is \textbf{2-complete} if it is an element of $\mathcal{S}^{(2)}$, i.e., if for every $v\in T$, all the vertices in the 2-completion of $v$ are also in $T$. A parabolic subgroup is 2-complete if it can be written in the form $gA_Tg^{-1}$ for some $g\in A_\Gamma$ and some 2-complete $T\subseteq V(\Gamma)$. 

A \textbf{2-completion} of a parabolic subgroup $P$ of $A_\Gamma$ is a parabolic subgroup that is 2-complete and contains $P$. 

\begin{rmk}\label{canonical 2-completion}
	A parabolic subgroup can have many distinct 2-completions. For a standard parabolic subgroup $A_T$, there is a canonical choice: $A_R$ where $R$ is the union of the 2-completions of the vertices of $T$. However in the case of a general (non-standard) parabolic subgroup, there is not a canonical choice of 2-completion since a parabolic subgroup may be conjugate to more than one standard parabolic.
\end{rmk}

\begin{defn}
	The \textbf{2-complete Artin complex}, $\widehat{X_\Gamma}$, is the following simplicial complex of groups: 
	\begin{itemize}
		\item there is a vertex for every left coset of a standard parabolic subgroup of the form $A_{\Gamma \setminus T}$ for $T$ some single connected component of $\widehat{\Gamma}$, and 
		\item a collection of vertices spans a simplex if the associated cosets have collective nonempty intersection.
	\end{itemize}
\end{defn}

Let $K$ denote the fundamental domain of $\widehat{X_\Gamma}$, i.e., the simplex corresponding to standard parabolic subgroups of $A_\Gamma$. 

\begin{exm}\label{2-complete Artin complex example}
	We will now show an example of $\widehat{\Gamma}$ and $K$, given the following defining graph $\Gamma$. 

	\begin{center}
	\begin{tikzpicture}
	\def \lilRad {0.08}
	\draw [color=black,  style=thick] (0,0)--(0,2);
	\draw [color=black,  style=thick] (0,0)--(2,0);
	\draw [color=black,  style=thick] (0,2)--(2,2);
	\draw [color=black,  style=thick] (2,0)--(2,2);
	\draw [color=black,  style=thick] (-1.5,1)--(0,0);
	\draw [color=black,  style=thick] (-1.5,1)--(0,2);
	\draw [color=black,  style=thick] (3.5,1)--(2,0);
	\draw [color=black,  style=thick] (3.5,1)--(2,2);
	\draw [fill=black] (-1.5,1) circle (\lilRad);
	\draw [fill=black] (0,0) circle (\lilRad);
	\draw [fill=black] (0,2) circle (\lilRad);
	\draw [fill=black] (2,0) circle (\lilRad);
	\draw [fill=black] (2,2) circle (\lilRad);
	\draw [fill=black] (3.5,1) circle (\lilRad);
	\node [left] at (-1.55,1) {a}; 
	\node [above] at (0,2.1) {b}; 
	\node [below] at (0,-0.1) {c}; 
	\node [above] at (2,2.1) {d}; 
	\node [below] at (2,-0.1) {e}; 
	\node [right] at (3.55,1) {f};
	\node [left] at (-0.65,1.7) {2};
	\node [left] at (-0.65,0.3) {7};
	\node [right] at (0,1) {2};
	\node [left] at (2,1) {2};
	\node [above] at (1,2) {5};
	\node [below] at (1,0) {6};
	\node [right] at (2.65,1.7) {3};
	\node [right] at (2.65,0.3) {9};
	\node [left] at (-2.2,1) {$\Gamma = $};
	\end{tikzpicture}
	\hspace{1cm}
	\begin{tikzpicture}
	\def \lilRad {0.08}
	\draw [color=black,  style=thick] (0,0)--(0,2);
	\draw [color=black,  style=thick] (2,0)--(2,2);
	\draw [color=black,  style=thick] (-1.5,1)--(0,2);
	\draw [fill=black] (-1.5,1) circle (\lilRad);
	\draw [fill=black] (0,0) circle (\lilRad);
	\draw [fill=black] (0,2) circle (\lilRad);
	\draw [fill=black] (2,0) circle (\lilRad);
	\draw [fill=black] (2,2) circle (\lilRad);
	\draw [fill=black] (3.5,1) circle (\lilRad);
	\node [left] at (-1.55,1) {a}; 
	\node [above] at (0,2.1) {b}; 
	\node [below] at (0,-0.1) {c}; 
	\node [above] at (2,2.1) {d}; 
	\node [below] at (2,-0.1) {e}; 
	\node [right] at (3.55,1) {f};
	\node [left] at (-0.65,1.7) {2};
	\node [right] at (0,1) {2};
	\node [left] at (2,1) {2};
	\node [left] at (-2.2,1) {$\widehat{\Gamma} = $};
	\end{tikzpicture}
	\end{center}

Label the connected components in $\widehat{\Gamma}$ as $T = \{a, b, c\}$, $U = \{d, e\}$, and $V=\{f\}$. The 2-complete Artin complex $\widehat{X_\Gamma}$ has the following fundamental domain: 

	\begin{center}
	\begin{tikzpicture}
	\def \lilRad {0.08}
	\filldraw[color=black, fill=gray!40, thick] ((0,0) -- (3,0) -- (1.5,2.5) -- cycle;
	\draw [fill=black] (0,0) circle (\lilRad);
	\draw [fill=black] (3,0) circle (\lilRad);
	\draw [fill=black] (1.5,2.5) circle (\lilRad);
	\node at (1.5,1) {$A_\emptyset$};
	\node [left] at (0,0) {$A_{T\cup V}$};
	\node [right] at (3,0) {$A_{U\cup V}$};
	\node [above] at (1.5,2.5) {$A_{T\cup U}$};
	\node [left] at (0.75,1.5) {$A_T$};
	\node [right] at (2.25,1.5) {$A_U$};
	\node [below] at (1.5,0) {$A_V$};
	\node [below] at (-1.5,1.5) {$K = $};
	\end{tikzpicture}
	\end{center}

\end{exm}

\begin{obs}
	The $2$-complete proper parabolic subgroups of $A_\Gamma$ are exactly the stabilizers of simplices of $\widehat{X_\Gamma}$. 
\end{obs}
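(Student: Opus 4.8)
The plan is to set up an explicit correspondence between simplices of $\widehat{X_\Gamma}$ and $2$-complete proper parabolic subgroups, with the standard parabolic intersection formula as the engine. Write the connected components of $\widehat{\Gamma}$ as $T_1, \dots, T_k$, so that they partition $S$ and the $2$-complete subsets of $S$ are exactly the unions $\bigcup_{i\in I} T_i$ for $I\subseteq\{1,\dots,k\}$. In particular the complement of a $2$-complete set is again $2$-complete, and a $2$-complete set is proper precisely when it omits at least one component. The vertices of $\widehat{X_\Gamma}$ are the cosets $gA_{S\setminus T_i}$, whose stabilizer under left multiplication is the conjugate $gA_{S\setminus T_i}g^{-1}$. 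Note also that, since left multiplication preserves the component-type $T_i$ of a vertex and distinct cosets of a fixed $A_{S\setminus T_i}$ are disjoint, every simplex has vertices of pairwise distinct types; hence its stabilizer is the intersection of its vertex stabilizers, with no vertex permutations to worry about.

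The key tool I would invoke is the intersection formula for standard parabolic subgroups, $A_X\cap A_Y = A_{X\cap Y}$, which holds for all Artin groups by van der Lek \cite{van_der_lek_homotopy_1983}. Iterating it gives, for any $I\subseteq\{1,\dots,k\}$,
\[
	\bigcap_{i\in I} A_{S\setminus T_i} \;=\; A_{\bigcap_{i\in I}(S\setminus T_i)} \;=\; A_{S\setminus\bigcup_{i\in I}T_i},
\]
and $S\setminus\bigcup_{i\in I}T_i$ is exactly a $2$-complete set, proper as soon as $I\neq\emptyset$. This single identity is what makes both inclusions go through.

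For the inclusion that simplex stabilizers are $2$-complete proper parabolics, I would take a simplex $\sigma$ spanned by vertices $g_1A_{S\setminus T_{i_1}},\dots,g_mA_{S\setminus T_{i_m}}$. Since these cosets have nonempty common intersection, I can choose a single $h$ lying in all of them and rewrite each coset as $hA_{S\setminus T_{i_j}}$; the stabilizer of $\sigma$ is then $h\big(\bigcap_j A_{S\setminus T_{i_j}}\big)h^{-1} = hA_{S\setminus\bigcup_j T_{i_j}}h^{-1}$, a conjugate of a standard parabolic on a $2$-complete set, proper because $\sigma$ has at least one vertex and so $\bigcup_j T_{i_j}\neq\emptyset$. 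Conversely, given a $2$-complete proper parabolic $P=gA_Rg^{-1}$, I would write $S\setminus R = \bigcup_{i\in I}T_i$ with $I\neq\emptyset$, observe that the vertices $gA_{S\setminus T_i}$ for $i\in I$ all contain $g$ and hence span a simplex $\sigma$, and compute its stabilizer as $gA_{S\setminus\bigcup_{i\in I}T_i}g^{-1} = gA_Rg^{-1}=P$. The two inclusions together give the claimed equality of collections.

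The routine points — that common representatives exist for cosets with nonempty intersection, that the complement of a union of components is a union of components, and that simplex stabilizers are pointwise stabilizers — are immediate from the definitions. The only substantive ingredient is the standard-parabolic intersection formula $A_X\cap A_Y=A_{X\cap Y}$, and this is where I would be most careful: the naive retraction $s\mapsto 1$ for $s\notin X$ does not respect odd-labeled relations, so the formula must be cited as an established theorem rather than reproved by hand.
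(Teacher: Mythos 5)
Your proposal is correct, and there is nothing in the paper to compare it against: the statement is recorded as an Observation with no proof given. Your argument supplies exactly the justification the paper leaves implicit. The decomposition of $2$-complete subsets as unions of connected components of $\widehat{\Gamma}$, the observation that vertices of a common simplex have pairwise distinct types (so that the simplex stabilizer is the intersection of the vertex stabilizers), and the reduction to van der Lek's intersection formula $A_X\cap A_Y=A_{X\cap Y}$ are all sound, and your converse direction correctly produces, for each $2$-complete proper parabolic $gA_Rg^{-1}$, a simplex realizing it as a stabilizer. This is also consistent with the paper's own framework: in the complex-of-groups description of $\widehat{X_\Gamma}$ (Proposition \ref{basic_props_2complete_artin_cplx} and Example \ref{2-complete Artin complex example}), the local group of the face of $K$ spanned by the vertices $A_{\Gamma\setminus T_i}$, $i\in I$, is precisely $A_{\Gamma\setminus\bigcup_{i\in I}T_i}$, and simplex stabilizers in the development are conjugates of these local groups --- so one could alternatively cite \cite{bridson_haefliger} in place of the explicit coset computation. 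Your closing caution is also well placed: the intersection formula genuinely requires van der Lek's theorem (the naive retraction killing generators outside $X$ fails on odd-labeled edges), and citing it rather than improvising a retraction is the right call.
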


We say a vertex of $\widehat{X_\Gamma}$ has \textbf{type $\Gamma\setminus T$} if it corresponds to coset of the form $gA_{\Gamma\setminus T}$. Likewise, the type of each simplex in $\widehat{X_\Gamma}$ is given by the subset of $\Gamma$ corresponding to its associated coset. Equivalently, the type of a simplex is determined by the intersection of the types of the vertices of that simplex. Note that each top-dimensional simplex corresponds to a coset of the form $gA_\emptyset$. 

\begin{prop}[Basic Properties of $\widehat{X_\Gamma}$]\label{basic_props_2complete_artin_cplx}
	Let $A_\Gamma$ be an Artin group. 
	\begin{enumerate}
		\item If $\widehat{\Gamma}$ has at least two connected components, then $\widehat{X_\Gamma}$ is connected. 
		\item If $\widehat{\Gamma}$ has at least three connected components, then $\widehat{X_\Gamma}$ is simply connected. 
		\item The link in $\widehat{X_\Gamma}$ of a simplex of type $U$ is isomorphic to the 2-complete Artin complex $\widehat{X_{\Gamma'}}$ where $\Gamma'$ is the subgraph of $\Gamma$ spanned by the vertices of $U$. 
	\end{enumerate}
\end{prop}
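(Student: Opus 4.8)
The plan is to treat $\widehat{X_\Gamma}$ as the complex built from the action of $A_\Gamma$ on the cosets in its definition, with the simplex $K$ as a strict fundamental domain. Two standard facts drive everything: for standard parabolics one has $A_R\cap A_{R'}=A_{R\cap R'}$ \cite{van_der_lek_homotopy_1983}, so the face of $K$ of type $U=\Gamma\setminus\bigcup_{i\in I}T_i$ (where the $T_i$ are the components of $\widehat{\Gamma}$) has stabilizer $A_U$; and $A_\Gamma$ acts transitively on the simplices of each fixed type. Part (1) I would prove directly in the $1$-skeleton: by induction on word length it suffices to join $A_{\Gamma\setminus T_i}$ to $sA_{\Gamma\setminus T_i}$ for a generator $s$, and if $s\in T_i$ I pick any other component $T_j$ (possible since $n\geq 2$); then $A_{\Gamma\setminus T_j}$ shares $1$ with $A_{\Gamma\setminus T_i}$ and $s$ with $sA_{\Gamma\setminus T_i}$, giving a path. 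Equivalently, this says the vertex stabilizers generate $A_\Gamma$.

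For (2) I would invoke the standard criterion for developments of simple complexes of groups: since $A_\Gamma$ acts on the connected complex $\widehat{X_\Gamma}$ with the simplex $K$ as strict fundamental domain, $\widehat{X_\Gamma}$ is simply connected if and only if $A_\Gamma$ is the colimit of the diagram of face stabilizers $A_{\Gamma\setminus\bigcup_{i\in I}T_i}$ with their inclusions. There is always a canonical surjection from this colimit onto $A_\Gamma$, and simple connectivity is the statement that it is injective, i.e.\ that $A_\Gamma$ has no relations beyond those already present in the vertex groups and their intersections. The key observation --- and the source of the ``three components'' hypothesis --- is that each Artin relation $sts\cdots=tst\cdots$ involves generators from at most two components, so when $n\geq 3$ there is a component $T_j$ containing neither $s$ nor $t$, and the relation already holds inside $A_{\Gamma\setminus T_j}$. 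Thus every defining relation of $A_\Gamma$ holds in the colimit, producing an inverse map and hence the desired isomorphism.

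For (3), using transitivity I may assume $\sigma$ is the standard simplex of type $U$ with stabilizer $A_U=A_{\Gamma'}$, and I note that the components of $\widehat{\Gamma'}$ are exactly the $T_j\subseteq U$. A vertex $hA_{\Gamma\setminus T_j}$ lies in the link of $\sigma$ iff $A_U\cap hA_{\Gamma\setminus T_j}\neq\emptyset$; since $A_U\cap A_{\Gamma\setminus T_j}=A_{U\setminus T_j}$, such cosets correspond bijectively to the cosets $uA_{U\setminus T_j}$ with $u\in A_U$, which are precisely the vertices of $\widehat{X_{\Gamma'}}$. I would then check that this bijection is simplicial: a family of link vertices spans a simplex iff their cosets meet $A_U$ in a common point iff the corresponding cosets $u_kA_{U\setminus T_{j_k}}$ have nonempty common intersection, which is exactly the simplex condition in $\widehat{X_{\Gamma'}}$.

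The main obstacle is part (2): parts (1) and (3) are essentially bookkeeping with cosets and standard-parabolic intersections, whereas (2) requires correctly identifying $A_\Gamma$ with the colimit of the face-stabilizer diagram and seeing precisely why three components are needed --- with only two, a cross-component relation with $m_{st}\geq 3$ would survive as extra $\pi_1$. If one prefers to avoid the complex-of-groups machinery, the same conclusion follows from a direct van Kampen argument: after using (1) to move an edge-loop into standard position, one fills it by triangles, and the combinatorial relations required are again exactly the Artin relations, each supported in a single vertex group once $n\geq 3$.
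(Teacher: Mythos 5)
Your proposal is correct, and at the top level it follows the same framework as the paper: $\widehat{X_\Gamma}$ is the development of the simple complex of groups over the simplex $K$, and the paper disposes of all three parts by citing Bridson--Haefliger (Prop.\ 12.20 for connectivity and simple connectivity, Construction 12.24 for links). The difference is one of completeness rather than of route. For (1) and (3) you replace the citations with direct coset computations (the path $A_{\Gamma\setminus T_i}$ -- $A_{\Gamma\setminus T_j}$ -- $sA_{\Gamma\setminus T_i}$, and the identification $hA_{\Gamma\setminus T_j}\cap A_U = uA_{U\setminus T_j}$ via $A_R\cap A_{R'}=A_{R\cap R'}$); these are exactly the facts the cited constructions encode, so nothing new is happening, but the verification that each $T_j\subseteq U$ remains a connected component of $\widehat{\Gamma'}$ is a detail worth having on record. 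For (2) your writeup is genuinely more informative than the paper's: the citation of \cite[Prop 12.20(4)]{bridson_haefliger} requires knowing that $A_\Gamma$ is the colimit of the diagram of face stabilizers, and the paper never says why this holds or where the three-component hypothesis enters. Your observation that every Artin relation involves at most two components, hence lies in some vertex group $A_{\Gamma\setminus T_j}$ once $n\geq 3$ (and your remark that with $n=2$ a cross-component relation, necessarily with $m_{st}\geq 3$, is lost, so the colimit is only the free product $A_{T_1}\ast A_{T_2}$), is precisely the missing verification. So: same approach, with the one substantive hypothesis actually checked.
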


\begin{proof}
	The 2-complete Artin complex $\widehat{X_\Gamma}$ is a simplicial complex of groups in the language of Bridson-Haefliger \cite{bridson_haefliger}. The fundamental domain $K$ has vertices of the form $A_{\Gamma\setminus T}$ for $T$ a single connected component of $\widehat{\Gamma}$, and edges between vertices corresponding to nonempty intersection. The morphisms associated to the inclusion of faces of $K$ are the natural inclusions of the respective Artin groups. In this language, $\widehat{X_\Gamma}$ is the development of $K$, in the sense of \cite{bridson_haefliger}. 
	
	(1): If there are at least two connected components of $\widehat{\Gamma}$, then the fundamental domain $K$ of $\widehat{X_\Gamma}$ is at least one-dimensional and is connected. Furthermore, the Artin group $A_\Gamma$ is generated by the collection of parabolic subgroups which are stabilizers of simplices of $\widehat{X_\Gamma}$. By \cite[Prop 12.20]{bridson_haefliger}, $\widehat{X_\Gamma}$ is connected. 
	
	(2): If there are at least three connected components of $\widehat{\Gamma}$, then by \cite[Prop 12.20 (4)]{bridson_haefliger}, $\widehat{X_\Gamma}$ is simply connected. 
	
	(3): By \cite[Construction 12.24]{bridson_haefliger}, we can describe the link of a simplex in $\widehat{X_\Gamma}$ as the development of an appropriate subcomplex of groups. 
\end{proof}

\section{The 2-complete Artin Complex is Systolic}\label{2-complete Artin Complex is Systolic Section}
	
In this section we show that for a locally reducible Artin group $A_\Gamma$, where $\widehat{\Gamma}$ has at least three connected components, the associated 2-complete Artin complex $\widehat{X_\Gamma}$ is systolic. Let $X$ be a simplicial complex. A subcomplex $Y\subseteq X$ is \textbf{full} if any simplex of $X$ spanned by a set of vertices of $Y$ is a simplex of $Y$. A \textbf{full cycle} $\gamma\subseteq X$ is a cycle that is full as a subcomplex of $X$. See below for some examples. The \textbf{systole} of a simplicial complex $X$ is $$\text{sys}(X) = \text{min} \{ |\gamma| \mid  \gamma \text{ is a full cycle in } X\}$$
For a simplicial complex $X$, $\text{sys}(X)\geq 3$, and if there are no full cycles in $X$ then $\text{sys}(X) = \infty$. A simplicial complex $X$ is \textbf{locally $k$-large} if $\text{sys}(lk_X(Y))\geq k$ for all simplices $Y\subseteq X$. $X$ is \textbf{$k$-large} if it is locally $k$-large and $\text{sys}(X)\geq k$. 

\begin{defn}
		A simplicial complex $X$ is \textbf{systolic} if it is connected, simply-connected, and locally $6$-large. 
\end{defn}

\begin{center}
	\begin{tikzpicture}
	\draw [line width = 4pt, yellow] (1,0) -- (1,1) -- (0,1) -- cycle;
	\filldraw[color=black, fill=gray!40, thick] ((1,0) -- (1,1) -- (0,1) -- cycle;
	\draw node[below] at (0.5,-0.25) {Not a full cycle}; 
	\end{tikzpicture}		\begin{tikzpicture}
	\draw [line width = 4pt, yellow] (0,0) -- (1,0) -- (1,1) -- (0,1) -- cycle;
	
	\filldraw (0,0) circle [radius = 0.04];
	\filldraw (1,0) circle [radius = 0.04];
	\filldraw (0,1) circle [radius = 0.04];
	\filldraw (1,1) circle [radius = 0.04];
	
	\draw [thick] (0,0) -- (1,0) -- (1,1) -- (0,1) -- cycle;
	\draw [thick] (1,0) -- (0,1);
	
	\draw node[below] at (0.5,-0.25) {Not a full cycle}; 
	\end{tikzpicture}		\begin{tikzpicture}[scale = 1]
	\draw [line width = 4pt, yellow] (0,0) -- (1,0) -- (1,1) -- (0,1) -- cycle;
	
	\filldraw (0,0) circle [radius = 0.04];
	\filldraw (1,0) circle [radius = 0.04];
	\filldraw (0,1) circle [radius = 0.04];
	\filldraw (1,1) circle [radius = 0.04];
	
	\draw [thick] (0,0) -- (1,0) -- (1,1) -- (0,1) -- cycle;
	\draw node[below] at (0.5,-0.25) {A full cycle}; 
	\end{tikzpicture}
\end{center}

\begin{thm} \cite[Theorem 2.11]{bridson_haefliger} \label{flat quad thm}
	The sum of the interior angles of a quadrilateral in a CAT(0) space cannot be greater than $2\pi$. 
\end{thm}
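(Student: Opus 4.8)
The plan is to reduce the quadrilateral to two geodesic triangles joined along a diagonal, and to combine two facts about CAT(0) geometry: that the angle sum of a geodesic triangle is at most $\pi$, and that Alexandrov angles satisfy a triangle inequality.

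First I would record the two ingredients separately. The first is the angle comparison property of CAT(0) spaces: for a geodesic triangle $\Delta(p,q,r)$ in a CAT(0) space $X$, the Alexandrov angle at each vertex is no larger than the corresponding angle of its Euclidean comparison triangle $\bar{\Delta} \subseteq \R^2$. Since $\bar{\Delta}$ is a genuine Euclidean triangle, its three angles sum to exactly $\pi$, so the three Alexandrov angles of $\Delta$ sum to at most $\pi$. The second ingredient is the triangle inequality for Alexandrov angles: if three geodesics issue from a common point $x$, then the angle between the two extreme geodesics is at most the sum of the angles of the consecutive pairs. This second fact holds in \emph{any} metric space and does not invoke the CAT(0) hypothesis.

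Next, given a quadrilateral with vertices $A, B, C, D$ (in cyclic order) and geodesic sides, I would draw the geodesic diagonal $[A,C]$, which exists and is unique since CAT(0) spaces are uniquely geodesic. This splits the region into the two geodesic triangles $\Delta(A,B,C)$ and $\Delta(A,C,D)$. Applying the angle triangle inequality at the vertices $A$ and $C$ gives $\angle_A(D,B) \le \angle_A(D,C) + \angle_A(C,B)$ and $\angle_C(B,D) \le \angle_C(B,A) + \angle_C(A,D)$, while the angles at $B$ and $D$ are unchanged. Summing the four interior angles of the quadrilateral and regrouping the right-hand sides, the total is bounded above by the combined angle sums of $\Delta(A,B,C)$ and $\Delta(A,C,D)$, each of which is at most $\pi$ by the first ingredient; hence the total is at most $2\pi$.

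I expect the only real content to lie in the first ingredient, namely the domination of Alexandrov angles by comparison angles (and the resulting bound of $\pi$ on the angle sum of a geodesic triangle). Everything else — unique geodesics supplying the diagonal, the purely metric triangle inequality for angles, and the regrouping — is formal. One minor point to verify is that the diagonal genuinely subdivides the quadrilateral so that the angle bounds at $A$ and $C$ apply in the stated form; but because we only need the \emph{inequality} furnished by the angle triangle inequality, this holds regardless of the fine configuration of the geodesics.
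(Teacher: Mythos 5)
This statement is quoted from Bridson--Haefliger and the paper supplies no proof of its own, so there is nothing internal to compare against. Your argument is correct and is the standard one: split the quadrilateral along the (unique) diagonal $[A,C]$, bound the angle sum of each resulting geodesic triangle by $\pi$ via the CAT(0) comparison inequality, and recombine using the triangle inequality for Alexandrov angles at $A$ and $C$; your closing remark is also right that the angle triangle inequality is purely metric, so no positional claim about the diagonal is needed.
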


\begin{lemma}\cite[Lemma 5.1]{charney_tits_2000} \label{Deligne complex embeds}
	Let $A_\Gamma$ be a locally reducible Artin group. For any $T\subseteq S$, the natural inclusion $D_T\hookrightarrow D_\Gamma$ is an isometric embedding, and hence the image is convex. 
\end{lemma}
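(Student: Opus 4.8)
The plan is to exhibit the inclusion $D_T\hookrightarrow D_\Gamma$ as a local isometry whose image is convex; since convexity of a subcomplex of a CAT(0) space is a local condition, the real work reduces to a computation in links.

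First I would record that $A_T$ is again locally reducible: every finite-type triangle in the full subgraph of $\Gamma$ on $T$ is a finite-type triangle of $\Gamma$, hence of type $2$-$2$-$k$. By Theorem \ref{deligne cplx CAT(0) for LR}, both $D_T$ and $D_\Gamma$ with their Moussong metrics are CAT(0). Next I would verify that the natural inclusion $\iota\colon D_T\hookrightarrow D_\Gamma$ is a \emph{local isometry}. It is cellular: a cube $[hA_R,hA_{R'}]$ of $D_T$, with $h\in A_T$ and $R\subseteq R'\subseteq T$ and $A_{R'}$ finite type, is carried to the equally named cube of $D_\Gamma$. The Moussong metric on such a cube is the one it inherits as a face of the Coxeter cell of $W_{R'}$, and this Coxeter cell depends only on $W_{R'}$, not on the ambient group. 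Hence $\iota$ sends each cell isometrically onto its image, so $\iota$ is a local isometry and in particular $1$-Lipschitz.

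The heart of the argument is to show that $\iota(D_T)$ is \emph{locally convex} in $D_\Gamma$. Local convexity of a subcomplex of a piecewise-Euclidean CAT(0) complex is detected on links \cite{bridson_haefliger}: it suffices to show that for every simplex $\sigma$ of $D_T$ the inclusion of links $\mathrm{lk}(\sigma,D_T)\hookrightarrow \mathrm{lk}(\sigma,D_\Gamma)$ has image a full, $\pi$-convex subcomplex of the (CAT(1)) link, i.e.\ no geodesic of length $<\pi$ between two points of the sub-link leaves it. This is exactly where local reducibility is used: the links in the Deligne complex of a locally reducible group split as spherical joins coming from the reducible finite-type parabolics, and their angular metric is explicit enough that the sub-links $\mathrm{lk}(\sigma,D_T)$ can be seen to be $\pi$-convex. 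I expect this link computation to be the main obstacle, since it demands a careful description of the angular metric on the links and is the only place where the hypothesis enters essentially.

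Finally I would pass from local to global. Since $\iota(D_T)$ is a closed subcomplex it is complete and connected, so the local-to-global principle for convexity in CAT(0) spaces \cite{bridson_haefliger} upgrades local convexity to convexity; this already yields the last sentence of the statement. For the isometric-embedding claim, the local isometry $\iota$ preserves the length of every path, giving $d_{D_\Gamma}(\iota x,\iota y)\le d_{D_T}(x,y)$. Conversely, the unique $D_\Gamma$-geodesic from $\iota x$ to $\iota y$ lies in the convex set $\iota(D_T)$, and pulling it back through the injective local isometry $\iota$ produces a path in $D_T$ of the same length, so $d_{D_T}(x,y)\le d_{D_\Gamma}(\iota x,\iota y)$. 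Hence $\iota$ is an isometric embedding with convex image.
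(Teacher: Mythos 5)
This lemma is not proved in the paper at all: it is imported verbatim from Charney \cite[Lemma 5.1]{charney_tits_2000}, so there is no in-paper argument to compare against. Your outline does follow the standard route for Moussong-metric complexes (and the one taken in that reference): the inclusion preserves the metric cell by cell because a Coxeter cell depends only on the finite Coxeter group labelling it, convexity is reduced to fullness plus $\pi$-convexity of the sub-links, and a local-to-global principle for complete connected locally convex subspaces of CAT(0) spaces finishes the job. Your final paragraph, deducing the two-sided distance equality from convexity together with the length-preserving property of $\iota$, is correct, as is the preliminary observation that $A_T$ is again locally reducible.

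However, as written the proposal has a genuine gap, and you flag it yourself: the entire content of the lemma is the claim that for every simplex $\sigma$ of $D_T$ the sub-link $\mathrm{lk}(\sigma,D_T)$ is full and $\pi$-convex in $\mathrm{lk}(\sigma,D_\Gamma)$, and you write only that you \emph{expect} this to hold. This cannot be waved through: a full subcomplex of a CAT(0) piecewise Euclidean complex need not be locally convex (two adjacent edges of a unit square form a full subcomplex whose link at the common vertex has diameter $\pi/2<\pi$), so fullness plus the appeal to Theorem \ref{deligne cplx CAT(0) for LR} does not suffice, and the deferred link computation is precisely where local reducibility must do its work. To close the gap one has to describe the links explicitly: for a vertex $gA_R$ with $R$ finite type, local reducibility forces $A_R$ to be a product of cyclic factors with at most one dihedral factor, the link decomposes as a spherical join accordingly, and the angular metric is computable --- for instance in $\mathrm{lk}_{D_\Gamma}(A_\emptyset)$ an edge $(v_r,v_s)$ has length $\pi-\pi/m_{rs}\geq \pi/2$, exactly the computation exploited in the proof of Lemma \ref{TU not equal UT} --- after which one verifies that no geodesic of length less than $\pi$ between points of the sub-link can leave it. Until that verification is carried out, what you have is a correct plan rather than a proof.
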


\begin{prop}\label{connected components of Gamma hat}
	Let $A_\Gamma$ be locally reducible, and let $R\subseteq V(\Gamma)$ correspond to a finite type clique in $\Gamma$. If $|R|\geq 3$, then $R$ must be contained in a single connected component of $\widehat{\Gamma}$. 
\end{prop}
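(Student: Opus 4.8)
The plan is to reduce the statement to the combinatorial structure of the edges of $R$ labeled $\geq 3$: I will show that these "large" edges form a matching, and that deleting a matching from a complete graph on at least three vertices leaves a connected graph. The conclusion then follows since the surviving edges are exactly the $2$-edges spanning $\widehat{\Gamma}$ restricted to $R$.

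First I would record that every $3$-element subset of $R$ spans a finite type triangle. Since $A_R$ is finite type, the associated Coxeter group $W_R$ is finite, and for any $R' \subseteq R$ the subgroup $W_{R'}$ is a standard parabolic subgroup of $W_R$ and hence also finite; thus $A_{R'}$ is finite type. In particular, for any three vertices $a,b,c \in R$ the triangle $\{a,b,c\}$ is present (because $R$ is a clique) and is finite type. By local reducibility this triangle is of type $2-2-k$, so at most one of its three edges carries a label $\geq 3$.

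Next I would deduce that the edges of $R$ labeled $\geq 3$ form a matching. If two such edges shared a vertex, say $ab$ and $ac$ with $m_{ab}, m_{ac} \geq 3$, then the finite type triangle $\{a,b,c\}$ from the previous step would contain two edges labeled $\geq 3$, contradicting its $2-2-k$ form. Hence every vertex of $R$ is incident to at most one large edge. Finally I would verify connectivity of $\widehat{\Gamma}$ restricted to $R$. This restriction is the complete graph on $R$ with the matching of large edges deleted, so its remaining edges are exactly the $2$-edges. Given $u,v \in R$: if $m_{uv} = 2$ they are directly joined in $\widehat{\Gamma}$; if $m_{uv} \geq 3$, choose a third vertex $w \in R$, possible since $|R| \geq 3$, and observe that since the large edges form a matching, both $uw$ and $wv$ must be $2$-edges, giving a path $u - w - v$ in $\widehat{\Gamma}$. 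Thus all of $R$ lies in a single connected component of $\widehat{\Gamma}$.

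The argument is short, and I do not anticipate a serious obstacle. The only points requiring care are the standard fact that parabolic subgroups of finite type Artin groups are again finite type, which is what guarantees that every triangle inside $R$ is finite type so that local reducibility applies to it, and the elementary observation that removing a matching from a complete graph on at least three vertices keeps it connected. The matching structure of the large edges is the key insight that makes both of these steps go through.
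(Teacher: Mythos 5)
Your proof is correct and follows essentially the same route as the paper: both arguments reduce to the fact that every $3$-element subset of $R$ spans a finite type triangle, which local reducibility forces to be of type $2-2-k$, and then conclude connectivity of $R$ inside $\widehat{\Gamma}$ by an elementary path argument. Your version is slightly more explicit than the paper's in two places that the paper leaves implicit --- that finite type is inherited by subsets (via finiteness of standard parabolics of finite Coxeter groups) and the matching structure of the large edges --- but the underlying idea is identical.
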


\begin{proof}
	Choose $r\in R$. For any vertices $u,v\in R\setminus \{r\}$, the collection $\{r, u, v\}$ spans a finite-type triangle in $\Gamma$. Since $A_\Gamma$ is locally reducible, this triangle has type $2-2-k$. Thus $u$ and $v$ must lie in the 2-completion of $r$, and hence all three vertices must be in the same connected component of $\widehat{\Gamma}$. 
\end{proof}

\begin{lemma}\label{TU not equal UT}
	Let $A_\Gamma$ be a locally reducible Artin group such that $\widehat{\Gamma}$ has at least two connected components. Let $T$ and $U$ be two distinct connected components of $\widehat{\Gamma}$. If $t_1, t_2\in A_T$ and $u_1, u_2\in A_U$ are pairs of (not necessarily distinct) elements, then $t_1u_1\neq u_2t_2$. 
\end{lemma}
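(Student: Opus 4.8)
The plan is to argue by contradiction, exploiting the CAT(0) geometry of the Deligne complex $D_\Gamma$ (Theorem~\ref{deligne cplx CAT(0) for LR}) together with the flat quadrilateral theorem (Theorem~\ref{flat quad thm}). I may assume $t_1,t_2,u_1,u_2$ are all nontrivial: if one of them is trivial, the claimed equality forces a nontrivial element into $A_T\cap A_U$, which is $\{1\}$ since $T$ and $U$ are disjoint standard generating sets, and the statement then reduces to that observation. So suppose toward a contradiction that $t_1u_1=u_2t_2=:g$ with all four elements nontrivial. Let $o$ be the vertex of $D_\Gamma$ corresponding to the coset $A_\emptyset=\{1\}$, so that $\mathrm{Stab}(o)=\{1\}$ and distinct group elements move $o$ to distinct vertices. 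Consider the four vertices $o,\ t_1o,\ go,\ u_2o$. Using $g=t_1u_1=u_2t_2$, consecutive pairs lie in a common translate of a standard Deligne subcomplex: $[o,t_1o]\subseteq D_T$, $[t_1o,go]\subseteq t_1D_U$, $[go,u_2o]\subseteq u_2D_T$, and $[u_2o,o]\subseteq D_U$. By Lemma~\ref{Deligne complex embeds} each of $D_T,D_U$ is convex, hence so is every translate, so each side of this geodesic quadrilateral lies in the indicated convex subcomplex. The four vertices are genuinely distinct because $A_T\cap A_U=\{1\}$ excludes every possible coincidence (for instance $go=o$ would give $u_1=t_1^{-1}\in A_T\cap A_U$).

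The crucial simplification is that all four interior angles equal the single angle $\theta:=\angle_o(D_T,D_U)$ between $D_T$ and $D_U$ at $o$. Indeed, since $t_1,t_2\in A_T$ stabilize $D_T$ and $u_1,u_2\in A_U$ stabilize $D_U$, translating each vertex back to $o$ by the appropriate element (by $t_1^{-1}$ at $t_1o$, by $g^{-1}$ at $go$, by $u_2^{-1}$ at $u_2o$) carries the two incident sides isometrically onto geodesics running into $D_T$ and into $D_U$. Thus each interior angle is at least $\theta$, and the total angle sum is at least $4\theta$.

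It remains to show $\theta>\pi/2$; in fact I will show $\theta\geq 2\pi/3$. Because $D_\Gamma$ is CAT(0), its links are CAT(1) and $\theta$ equals the distance in $\mathrm{lk}(o)$ between the subcomplexes $\mathrm{lk}(o,D_T)$ and $\mathrm{lk}(o,D_U)$. In the Moussong metric, $\mathrm{lk}(o)$ is the spherical complex whose vertices are the generators $s\in S$ (directions toward the vertices $A_{\{s\}}$), whose simplices are the finite-type subsets of $S$, and in which the edge joining $s_i$ to $s_j$ has length $\pi-\pi/m_{ij}$; here $\mathrm{lk}(o,D_T)$ and $\mathrm{lk}(o,D_U)$ are precisely the full subcomplexes on the vertex sets $T$ and $U$. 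These are disjoint, and by Proposition~\ref{connected components of Gamma hat} no simplex of dimension $\geq 2$ contains vertices of both $T$ and $U$, so the only simplices joining the two parts are single edges $\{s_i,s_j\}$ with $s_i\in T$, $s_j\in U$ (belonging to no higher simplex). Since $T$ and $U$ are distinct components of $\widehat{\Gamma}$, no such edge is labeled $2$, so $m_{ij}\geq 3$ and the edge has length $\pi-\pi/m_{ij}\geq 2\pi/3$. Any path in $\mathrm{lk}(o)$ from the $T$-part to the $U$-part must cross at least one such edge in full, whence $\theta\geq 2\pi/3$. Combining with the previous paragraph, the interior angles sum to at least $4\cdot 2\pi/3=8\pi/3>2\pi$, contradicting Theorem~\ref{flat quad thm}; therefore $t_1u_1\neq u_2t_2$.

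I expect the last step to be the main obstacle: it requires identifying the Moussong metric on $\mathrm{lk}(o)$ (in particular the edge lengths $\pi-\pi/m_{ij}$, which give $\pi/2$ exactly for commuting generators and strictly more otherwise), justifying that the Alexandrov angle coincides with link distance via the CAT(1) property of links, and invoking local reducibility through Proposition~\ref{connected components of Gamma hat} to rule out any ``mixed'' higher simplices, so that every transition from the $T$-part to the $U$-part costs a full obtuse edge. The symmetry argument collapsing four a priori distinct angles into the single angle $\theta$ is what makes the quadrilateral estimate go through cleanly.
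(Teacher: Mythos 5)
Your proposal is correct and follows essentially the same route as the paper: form the geodesic quadrilateral on $A_\emptyset$, $t_1A_\emptyset$, $t_1u_1A_\emptyset$, $u_2A_\emptyset$ in the CAT(0) Deligne complex, use convexity of the translates of $D_T$ and $D_U$ together with Proposition~\ref{connected components of Gamma hat} and the Moussong edge lengths $\pi-\pi/m_{ij}$ to bound each interior angle below by $2\pi/3$, and contradict Theorem~\ref{flat quad thm}. The only (cosmetic) differences are that you bound all four angles where the paper bounds three, and you explicitly flag the degenerate cases with trivial elements, which the paper leaves implicit.
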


\begin{proof}
	Suppose by way of contradiction that there are elements $t_1, t_2\in A_T$ and $u_1, u_2\in A_U$ such that $t_1 u_1 = u_2 t_2$. Then there is a quadrilateral in the Deligne complex $D_\Gamma$ determined by vertices $A_\emptyset$, $u_1 A_\emptyset$, $t_1 u_1 A_\emptyset = u_2 t_2 A_\emptyset$, $t_2 A_\emptyset$, and the geodesics between them. 
	
	\begin{center}
		\begin{tikzpicture}
		\def \lilRad {0.08}
		\draw [color=black,  style=thick] (1,0)--(0,1);
		\draw [color=black,  style=thick] (1,0)--(2,1);
		\draw [color=black,  style=thick] (0,1)--(1,2);
		\draw [color=black,  style=thick] (2,1)--(1,2);
		\draw [fill=black] (1,0) circle (\lilRad);
		\draw [fill=black] (0,1) circle (\lilRad);
		\draw [fill=black] (2,1) circle (\lilRad);
		\draw [fill=black] (1,2) circle (\lilRad);
		\node [below] at (1,0) {$A_\emptyset$};
		\node [left] at (0,1) {$u_1 A_\emptyset$};
		\node [right] at (2,1) {$t_2 A_\emptyset$};
		\node [above] at (1,2) {$t_1u_1 A_\emptyset = u_2 t_2 A_\emptyset$};
		\end{tikzpicture}
	\end{center}
	
	The geodesics forming the sides of this quadrilateral are not edges, and do not even need to be edge paths. However since $D_\Gamma$ is CAT(0) by Theorem \ref{deligne cplx CAT(0) for LR}, the geodesics are unique. 
	
	We will now consider the measure of each interior angle of the quadrilateral. The goal is to show that such a quadrilateral cannot exist using Theorem \ref{flat quad thm}. 
	
	By Lemma \ref{Deligne complex embeds}, the two sides $[A_\emptyset, u_1 A_\emptyset]$ and $[A_\emptyset, t_2 A_\emptyset]$ are contained in the Deligne complex associated to $A_U$ (denoted $D_U$) and the Deligne complex associated to $A_T$ (denoted $D_T$), respectively. Hence the angle between the two sides, i.e., the interior angle of the quadrilateral at $A_\emptyset$, is bounded below by the angle at $A_\emptyset$ between $D_U$ and $D_T$ as embedded subcomplexes of $D_\Gamma$. 
	
	Let $v_s$ denote the vertex in $lk_{D_\Gamma}(A_\emptyset)$ corresponding to the edge $(A_\emptyset, A_{\{s\}})$ in $D_\Gamma$. The angle between $D_U$ and $D_T$ in $D_\Gamma$ is determined by the distance between vertices $v_t$ and $v_u$ in $lk_{D_\Gamma}(A_\emptyset)$ for all $t\in T$ and $u\in U$. 
	
	Note that the 1-skeleton of $lk_{D_\Gamma}(A_\emptyset)$ is isomorphic to the defining graph $\Gamma$. Indeed, there is one vertex in $lk_{D_\Gamma}(A_\emptyset)$ for each edge in $D_\Gamma$ of the form $(A_\emptyset, A_{\{s\}})$, and hence for each generator $s\in V(\Gamma)$. And there is an edge between two vertices $v_r$ and $v_s$ in $lk_{D_\Gamma}(A_\emptyset)$ if and only if there is a 2-dimensional cube in $D_\Gamma$ with vertices $A_\emptyset$, $A_{\{r\}}$, $A_{\{s\}}$, and $A_{\{r,s\}}$, which happens precisely when $A_{\{r,s\}}$ is a finite type Artin group, i.e., there is an edge between $r$ and $s$ in $\Gamma$. 
	
	Let $t\in T$ and $u\in U$ be arbitrary. By Proposition \ref{connected components of Gamma hat}, for any generator $s\in V(\Gamma)$, it must be that $A_{\{s,t,u\}}$ is not finite type. Hence in $lk_{D_\Gamma}(A_\emptyset)$, there are no $n$-dimensional simplices between $v_t$ and $v_u$ for $n\geq 2$, and so the distance between $v_t$ and $v_u$ is determined by the $1$-skeleton of $lk_{D_\Gamma}(A_\emptyset)$. For an edge $(r,s)$ in $\Gamma$ with label $m_{rs}$, the corresponding edge $(v_r, v_s)$ in $lk_{D_\Gamma} (A_\emptyset)$ has length $\pi - \frac{\pi}{m_{rs}}$ with respect to the Moussong metric. Since $m_{tu}\geq 3$, the distance between $v_t$ and $v_u$ is at least $\frac{2\pi}{3}$. 
	
	Therefore, the interior angle of the above quadrilateral at vertex $A_\emptyset$ is at least $\frac{2\pi}{3}$. 
 
	We can repeat this argument for vertices $u_1 A_\emptyset$ and $t_2 A_\emptyset$ by translating each by $u_1^{-1}$ and $t_2^{-1}$, respectively, which preserves angle measures. 
	
	It follows that the sum of interior angles in the quadrilateral at vertices $u_1 A_\emptyset$, $A_\emptyset$, and $t_2 A_\emptyset$, is at least $2\pi$. By Theorem \ref{flat quad thm}, it is impossible for the fourth vertex of this quadrilateral to exist. Hence we have reached a contradiction, and it must be that $t_1u_1 \neq u_2t_2$. 
\end{proof}

\begin{lemma}\label{2 conn comps 6 large}
	Let $A_\Gamma$ be a locally reducible Artin group. If $\widehat{\Gamma}$ has exactly two connected components, then $\widehat{X_\Gamma}$ is 6-large. 
\end{lemma}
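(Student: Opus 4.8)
The plan is to exploit the fact that, when $\widehat{\Gamma}$ has exactly two connected components $T$ and $U$, the complex $\widehat{X_\Gamma}$ is very simple: it is a $1$-dimensional bipartite graph. Since the only connected components of $\widehat{\Gamma}$ are $T$ and $U$, the vertices come in exactly two types, namely cosets of $A_{\Gamma\setminus T}=A_U$ and cosets of $A_{\Gamma\setminus U}=A_T$. A maximal simplex corresponds to a coset $gA_\emptyset=\{g\}$, and the only vertices containing $g$ are $gA_T$ and $gA_U$; hence every maximal simplex is an edge and $\widehat{X_\Gamma}$ is $1$-dimensional. Moreover, two distinct cosets of the same standard parabolic are disjoint, so there are no edges between two vertices of the same type, and $\widehat{X_\Gamma}$ is bipartite.

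First I would observe that local $6$-largeness is automatic for a graph: the link of a vertex is a discrete set of points and the link of an edge is empty, so neither contains a full cycle and $\mathrm{sys}(\mathrm{lk})=\infty\geq 6$ in every case. Thus the entire content of the lemma is the global inequality $\mathrm{sys}(\widehat{X_\Gamma})\geq 6$. Since $\widehat{X_\Gamma}$ is bipartite it has no cycles of odd length, so in particular no full cycles of length $3$ or $5$; the only short full cycles that could occur have length $4$ (and a $4$-cycle in a bipartite graph is automatically full, having no possible chords). Hence it suffices to prove that $\widehat{X_\Gamma}$ contains no embedded $4$-cycle.

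The main step, then, is to rule out $4$-cycles, and this is exactly where Lemma \ref{TU not equal UT} enters. Suppose $a_1,b_1,a_2,b_2$ form a $4$-cycle with $a_i$ of type $T$ and $b_i$ of type $U$. Choosing an element in each of the four edge-intersections gives representatives $x_1,x_2,x_3,x_4\in A_\Gamma$ with $a_1=x_1A_T=x_4A_T$, $b_1=x_1A_U=x_2A_U$, $a_2=x_2A_T=x_3A_T$, and $b_2=x_3A_U=x_4A_U$. Setting $u_1=x_1^{-1}x_2$, $t_1=x_2^{-1}x_3$, $u_2=x_3^{-1}x_4$, $t_2=x_4^{-1}x_1$, one reads off $u_1,u_2\in A_U$ and $t_1,t_2\in A_T$ together with the telescoping relation $u_1t_1u_2t_2=1$, which rearranges to $t_2^{-1}u_2^{-1}=u_1t_1$, an equation of the form $t'u'=u''t''$ forbidden by Lemma \ref{TU not equal UT}. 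The only point requiring care is non-degeneracy: the cycle having four distinct vertices forces $t_1=x_2^{-1}x_3\notin A_U$ and $u_2=x_3^{-1}x_4\notin A_T$ (comparing the two coset representatives of $b_1$ versus $b_2$, and of $a_2$ versus $a_1$), so in particular $t_1\neq 1$ and $u_2\neq 1$; these are precisely the two factors that must be nontrivial for the quadrilateral built in the proof of Lemma \ref{TU not equal UT} to be a genuine, non-degenerate quadrilateral. With non-degeneracy in hand, Lemma \ref{TU not equal UT} yields a contradiction, so no $4$-cycle exists.

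I expect the bookkeeping of the non-degeneracy conditions to be the only genuinely delicate part: one must confirm that the four vertices being pairwise distinct translates into nontriviality of the correct pair of elements, so that Lemma \ref{TU not equal UT} truly applies rather than degenerating. Once $4$-cycles are excluded, $\mathrm{sys}(\widehat{X_\Gamma})\geq 6$ follows immediately, and combined with the automatic local $6$-largeness this shows that $\widehat{X_\Gamma}$ is $6$-large.
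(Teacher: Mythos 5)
Your proposal is correct and follows essentially the same route as the paper: both reduce to showing $\widehat{X_\Gamma}$ is a bipartite graph with no full $4$-cycles and then invoke Lemma~\ref{TU not equal UT}, the only difference being that the paper normalizes a length-$3$ path so that $K$ is its middle edge while you write the telescoping relation around the $4$-cycle directly. Your explicit attention to the non-degeneracy needed for Lemma~\ref{TU not equal UT} to apply is a point the paper passes over more quickly, but it does not change the substance of the argument.
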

	
\begin{proof}
	If there are exactly two connected components of $\widehat{\Gamma}$, the fundamental domain $K$ of $\widehat{X_\Gamma}$ is a 1-dimensional simplex. By construction, there will be no higher-dimensional simplices in $\widehat{X_\Gamma}$. Thus $\widehat{X_\Gamma}$ is 6-large if $\text{sys}(\widehat{X_\Gamma})\geq 6$. 
	
	Let $T$ and $U$ be the subsets of generators corresponding to the two connected components of $\widehat\Gamma$. Any edge in $\widehat{X_\Gamma}$ will have exactly one vertex of type $T$ and one vertex of type $U$, since every edge in the complex is some translate of $K$. In particular, any cycle in $\widehat{X_\Gamma}$ will have even length. 
	
	We need to show that $\text{sys}(\widehat{X_\Gamma}) \geq 6$. By the preceding paragraph, this is equivalent to showing that there are no full cycles of length 4 in $\widehat{X_\Gamma}$. Consider a path of length 3 in $\widehat{X_\Gamma}$. By translating, we can assume that the fundamental domain $K$ is the middle edge in the path. 

	Since the local group of $K$ is $A_\emptyset$, the local groups of the two edges adjacent to $K$ in the path are of the form $g_1A_\emptyset$ and $g_2A_\emptyset$ for some $g_1, g_2\in A_\Gamma$.
	
	Since $\widehat{X_\Gamma}$ has an edge between $g_2 A_U$ and $A_T$, there is a coset representative of $g_2A_U$ which is in $A_T$. Without loss of generality, assume $g_2$ is this coset representative. Similarly, by considering the edge between $A_U$ and $g_1 A_T$, we may assume $g_1\in A_U$. 
	
	\begin{center}
		\begin{tikzpicture}
		\def \lilRad {0.08}
		\draw [color=black,  style=thick] (0,0)--(0,2);
		\draw [color=black,  style=thick] (0,0)--(2,0);
		\draw [color=black,  style=thick, dashed] (0,2)--(2,2);
		\draw [color=black,  style=thick] (2,0)--(2,2);
		\draw [fill=black] (0,0) circle (\lilRad);
		\draw [fill=black] (0,2) circle (\lilRad);
		\draw [fill=black] (2,0) circle (\lilRad);
		\draw [fill=black] (2,2) circle (\lilRad);
		\node [left] at (0,0) {$A_T$};
		\node [right] at (2,0) {$A_U$};
		\node [left] at (0,2) {$g_2 A_U$};
		\node [right] at (2,2) {$g_1 A_T$};
		\node[below] at (1,0) {$A_\emptyset$};
		\node[right] at (2,1) {$g_1 A_\emptyset$};
		\node[left] at (0,1) {$g_2 A_\emptyset$};
		\end{tikzpicture}
	\end{center}

	If there was an edge connecting the two endpoints of this path, the cosets $g_2 A_U$ and $g_1 A_T$ would have nonempty intersection, implying there are elements $u\in A_U$ and $t\in A_T$ such that $g_2 u = g_1 t$. But this contradicts Lemma \ref{TU not equal UT}, so the endpoints of the path cannot be connected by an edge. Thus there are no full cycles of length 4 in $\widehat{X_\Gamma}$. Hence if $\widehat{\Gamma}$ has two connected components, then $\widehat{X_\Gamma}$ is 6-large.
\end{proof}

The following proposition of Januszkiewicz-Swaitkowski \cite{januszkiewicz_swiatkowski_2006}, specialized for our case, will be used in the last step of our proof of Theorem \ref{2-complete Artin cplx is systolic}. Recall that a simplicial complex is systolic if it is connected, simply-connected, and locally 6-large. 

\begin{prop}\cite[Prop 1.4]{januszkiewicz_swiatkowski_2006} \label{Jan-Swait}
	If $X$ is a systolic simplicial complex, then $X$ is $6$-large. 
\end{prop}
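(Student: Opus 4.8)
The plan is to reduce the claim to a statement about the global systole and then run a minimal–disk / combinatorial Gauss--Bonnet argument. A systolic complex is locally $6$-large by definition, so to show that $X$ is $6$-large it remains only to prove $\mathrm{sys}(X)\ge 6$, i.e.\ that $X$ has no full cycle of length $3$, $4$, or $5$. I would argue by contradiction, assuming that a full cycle $\gamma$ of length $n\le 5$ exists and deriving a contradiction from connectedness, simple-connectedness, and local $6$-largeness.

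First I would fill $\gamma$. Since $X$ is simply connected, $\gamma$ bounds a \emph{simplicial disk diagram}: a simplicial map $f\colon D\to X$ from a triangulated $2$-disk $D$ carrying $\partial D$ onto $\gamma$. Among all such diagrams I select one with the fewest triangles. The technical core is to exploit this minimality: via the standard elementary surgeries on disk diagrams --- deleting degenerate triangles, folding two triangles that share an edge and have equal image, and removing a chord in a vertex link --- one shows that a minimal diagram is \emph{reduced}. In particular, for each interior vertex $v$ of $D$ the link $lk_D(v)$ maps to an embedded full cycle in $lk_X(f(v))$. Granting this, local $6$-largeness gives $\mathrm{sys}(lk_X(f(v)))\ge 6$, whence $\deg_D(v)=|lk_D(v)|\ge 6$ for every interior vertex $v$.

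Next I would bound the boundary degrees using fullness of $\gamma$. When $n\ge 4$, no boundary vertex $v$ can lie in a single triangle of $D$: the third edge of that triangle would join the two neighbours of $v$ along $\gamma$, which are non-adjacent in $\gamma$ once $n\ge 4$, producing a chord and contradicting fullness. Hence every boundary vertex satisfies $\deg_D(v)\ge 2$. Assigning each triangle-corner the angle $\pi/3$ and setting the curvature $\kappa(v)=2\pi-\tfrac{\pi}{3}\deg_D(v)$ at interior vertices and $\kappa(v)=\pi-\tfrac{\pi}{3}\deg_D(v)$ at boundary vertices, the combinatorial Gauss--Bonnet theorem yields $\sum_v\kappa(v)=2\pi\chi(D)=2\pi$. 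The interior contributions are nonpositive and each boundary contribution is at most $\tfrac{\pi}{3}$, so $2\pi\le n\cdot\tfrac{\pi}{3}\le\tfrac{5\pi}{3}$, a contradiction. The remaining case $n=3$ I would dispatch directly: there the same count forces every boundary vertex to have degree $1$, which forces $D$ to be a single triangle, so that $\gamma$ bounds a $2$-simplex of $X$ and is therefore not full.

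I expect the reducedness step to be the main obstacle, since all the geometric input is concentrated there. One must enumerate the local modifications of a simplicial disk diagram, verify that each strictly lowers the triangle count, and deduce that an area-minimizing diagram admits none of them --- so that the interior-vertex links really are full cycles to which local $6$-largeness applies. Once reducedness is established, the Gauss--Bonnet bookkeeping and the boundary degree estimate are routine.
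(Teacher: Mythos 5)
The paper does not prove this proposition at all---it is quoted directly from Januszkiewicz--\'{S}wi\k{a}tkowski---so there is no internal argument to compare against, and your proof is essentially the one in the cited source: fill the short full cycle by a minimal simplicial disk, use local $6$-largeness plus reducedness to force interior vertex degrees $\geq 6$, and contradict combinatorial Gauss--Bonnet. The outline is correct, including the boundary-degree bound extracted from fullness of $\gamma$ and the separate treatment of $n=3$; the only substantive gap is the reducedness lemma you flag yourself (that a minimal diagram is nondegenerate and its interior vertex links map to embedded full cycles in the links of $X$), which is precisely the content of Lemmas 1.6--1.7 of the cited paper and must be carried out for the degree bound to apply.
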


\theoremstyle{plain}
\newtheorem*{2-complete Artin cplx is systolic}{Theorem \ref{2-complete Artin cplx is systolic}}
\begin{2-complete Artin cplx is systolic}
	Let $A_\Gamma$ be a locally reducible Artin group. If there are at least three connected components in $\widehat{\Gamma}$, then $\widehat{X_\Gamma}$ is systolic. 
\end{2-complete Artin cplx is systolic}
	
\begin{proof}
	Let $A_\Gamma$ be a locally reducible Artin group and suppose $\widehat\Gamma$ has at least three connected components. By Proposition \ref{basic_props_2complete_artin_cplx}, the complex $\widehat{X_\Gamma}$ is connected and simply connected. To show that $\widehat{X_\Gamma}$ is systolic, we must show that it is locally 6-large. We will use induction on the number of connected components $n$ of $\widehat{\Gamma}$. 
	
	Start by assuming that $n=3$. To show that $\widehat{X_\Gamma}$ is locally 6-large, we need to show that \linebreak $sys(Lk_{\widehat{X_\Gamma}}(g\Delta_T))\geq 6$ for any simplex $g\Delta_T$. Choose an arbitrary simplex $g\Delta_T$ of type $T$, that is, choose a simplex in the orbit of a face of the fundamental domain $K$ which has local group $A_T$. By Proposition \ref{basic_props_2complete_artin_cplx}, $Lk_{\widehat{X_\Gamma}}(g\Delta_T)$ is isomorphic to the $2$-complete Artin complex associated with Artin group $A_T$. 
	
	By construction, $T$ is a set of standard generators determined by some collection of connected components of $\widehat\Gamma$. By the definition of $\widehat{X_\Gamma}$, $T$ cannot contain all of the standard generators: it must be missing all generators from at least one connected component of $\widehat\Gamma$. Consider the subgraph $Z$ of $\Gamma$ consisting only of the vertices associated to generators in $T$. Let $\widehat Z$ be the analogue of $\widehat \Gamma$, i.e., the graph with vertex set $T$ and all edges between vertices which were labeled 2 in $\Gamma$. Since the generators in at least one connected component of $\widehat\Gamma$ had to be omitted from $T$, $\widehat Z$ has strictly fewer connected components than $\widehat\Gamma$. 
	
	Let $k$ be the number of connected components of $\widehat Z$. By the assumption that $n=3$, we have $0\leq k<3$. If $k=2$, then by Lemma \ref{2 conn comps 6 large}, $\widehat {X_Z} \simeq Lk_{\widehat{X_\Gamma}}(g\Delta_T)$ is 6-large. If $k = 0$ or $1$, then $\widehat {X_Z} \simeq Lk_{\widehat{X_\Gamma}}(g\Delta_T)$ has no embedded full cycles, so $sys(Lk_{\widehat{X_\Gamma}}(g\Delta_T) = \infty$. Hence when $n=3$, $\widehat{X_\Gamma}$ is locally 6-large and therefore is systolic. 
	
	Now assume that $n>3$, i.e., $\widehat\Gamma$ has more than 3 connected components. Also assume that every 2-complete Artin complex $\widehat{X_Z}$ where $\widehat{Z}$ has $k<n$ connected components is 6-large. Indeed, if $k = 0, 1,$ or $2$, then we have shown $\widehat{X_Z}$ is 6-large. If $3\leq k<n$, then by induction we assume $\widehat{X_Z}$ is systolic and by Proposition \ref{Jan-Swait} it is 6-large. It remains to show that $\widehat{X_\Gamma}$ is locally 6-large. 
	
	Consider an arbitrary simplex $h\Delta_U$ of $\widehat{X_\Gamma}$ of type $U$. By Proposition \ref{basic_props_2complete_artin_cplx}, we know that $Lk_{\widehat{X_\Gamma}}(h\Delta_U)$ is isomorphic to the $2$-complete Artin complex associated with Artin group $A_U$. By construction, $U$ cannot contain generators from all the connected components of $\widehat\Gamma$, so as a $2$-complete Artin complex $Lk_{\widehat{X_\Gamma}}(h\Delta_U)$ is associated with strictly fewer than $n$ connected components. So by the induction hypothesis, $Lk_{\widehat{X_\Gamma}}(h\Delta_U)$ is 6-large. Since our choice of simplex was arbitrary, $\widehat{X_\Gamma}$ is locally 6-large. 
	
	Hence if there are at least three connected components of $\widehat\Gamma$, $\widehat{X_\Gamma}$ is systolic. 
\end{proof}

\section{Weakly Malnormal}\label{Weakly Malnormal Section}

In this section, we will use the systolicity of the 2-complete Artin complex and the following well-known result from systolic geometry to prove that most parabolic subgroups of locally reducible Artin groups are weakly malnormal. (See, e.g.,  \cite[Lemma 14]{cmv_large_2023}.) 

\begin{lemma}\label{systolic fixing}
	Let $G$ be a group acting without inversions on a systolic simplicial complex $X$. If $H\leq G$ fixes two vertices of $X$, then $H$ fixes pointwise every combinatorial geodesic between them. 
\end{lemma}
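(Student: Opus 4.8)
The plan is to induct on the combinatorial distance $n = d(x,y)$ between the two fixed vertices, measured in the $1$-skeleton of $X$. When $n = 0$ the statement is vacuous, and when $n = 1$ a combinatorial geodesic is just the edge $[x,y]$, whose only vertices $x$ and $y$ are fixed by hypothesis. So I would assume $n \geq 2$ and that the claim holds for all pairs of vertices at distance less than $n$. Fix an arbitrary combinatorial geodesic $\gamma = (x = x_0, x_1, \dots, x_n = y)$ and an element $h \in H$; since $h$ acts simplicially and fixes $x$ and $y$, the image $h(\gamma)$ is again a combinatorial geodesic from $x$ to $y$. It then suffices to prove that $h(x_1) = x_1$: granting this, $h$ fixes both $x_1$ and $y$, which lie at distance $n-1$, and the tail $(x_1, \dots, x_n)$ is a geodesic between them, so the inductive hypothesis forces $h(x_i) = x_i$ for all $i \geq 1$, while $h(x_0) = x$ is fixed by assumption.

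The core of the argument is to understand the set of possible \emph{first steps}
\[
N = \{\, v \in X^{(0)} : v \sim x \text{ and } d(v,y) = n-1 \,\}.
\]
Note that $x_1 \in N$, and because $h$ fixes $x$ and $y$ it permutes $N$. The key claim is that $N$ spans a simplex of $X$. To establish this I would invoke the convexity theory of systolic complexes from \cite{januszkiewicz_swiatkowski_2006}: the ball $Y = B_{n-1}(y)$ is a convex subcomplex of the systolic complex $X$, and $x$ lies at distance $1$ from $Y$. Since every neighbor $v$ of $x$ satisfies $d(v,y) \geq n-1$, the neighbors of $x$ lying in $Y$ are exactly the vertices of $N$, and a standard consequence of convexity is that the neighbors of a vertex at distance $1$ from a convex subcomplex span a simplex. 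In the smallest case $n = 2$ this is already visible directly: two distinct, non-adjacent common neighbors of $x$ and $y$ would, together with $x$ and $y$, bound a full $4$-cycle, contradicting $\mathrm{sys}(X) \geq 6$, and the flagness of $X$ then upgrades pairwise adjacency to a genuine simplex.

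Once $N$ spans a simplex $\sigma$, the permutation of $N$ induced by $h$ gives $h(\sigma) = \sigma$; that is, $h$ stabilizes $\sigma$ setwise. Here the hypothesis that $G$ acts \emph{without inversions} is essential, since it guarantees that a setwise stabilizer of a simplex fixes that simplex pointwise. Hence $h$ fixes every vertex of $N$, in particular $h(x_1) = x_1$, which closes the induction. I expect the main obstacle to be the claim that $N$ spans a simplex, as this is precisely the step that uses systolicity in an essential way; rather than reproving the underlying machinery, I would carry it out by quoting the convexity of balls and the simplicial structure of projections onto convex subcomplexes established by Januszkiewicz and \'{S}wi\k{a}tkowski.
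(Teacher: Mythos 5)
Your argument is correct, and it is essentially the standard proof of this fact: induct on the distance, observe that the set of neighbours of $x$ lying on some geodesic to $y$ is the projection of $x$ onto the convex ball $B_{n-1}(y)$ and hence spans a simplex by the Projection Lemma of Januszkiewicz--\'{S}wi\k{a}tkowski, and then use the no-inversions hypothesis to fix that simplex pointwise. The paper itself gives no proof, citing the lemma as well known (Lemma 14 of \cite{cmv_large_2023}), and your write-up reproduces that reference's argument; the only point worth flagging is that the flagness you invoke in the $n=2$ case is part of the definition of ($6$-)largeness in \cite{januszkiewicz_swiatkowski_2006} even though the paper's stated definition omits it.
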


A combinatorial geodesic between two vertices is a minimal-length edge path. Note that in a systolic simplicial complex, there could be many distinct combinatorial geodesics between two given vertices. 

\begin{defn}
	Let $G$ be a group. A subgroup $H$ of $G$ is \textbf{weakly malnormal} if there exists $g\in G$ such that $|H \cap gHg^{-1}| <\infty$. 
\end{defn}

\theoremstyle{plain}
\newtheorem*{weakly malnormal}{Theorem \ref{weakly malnormal}}
\begin{weakly malnormal}
	Let $A_\Gamma$ be a locally reducible Artin group such that $\widehat{\Gamma}$ has at least two connected components. Then any parabolic subgroup $P$ of $A_\Gamma$ which has a 2-completion that is not all of $A_\Gamma$ is weakly malnormal. 
\end{weakly malnormal}

\begin{proof}
	Let $V$ be an arbitrary connected component of $\widehat{\Gamma}$. We will start by showing that the parabolic subgroup $A_{\Gamma\setminus V}$ is weakly malnormal. Our goal is to construct an element $g\in A_\Gamma$ such that $|A_{\Gamma\setminus V} \cap g A_{\Gamma\setminus V} g^{-1}|<\infty$; in fact, we will show this intersection is trivial. 
	
	Recall that by the construction of $\widehat{X_\Gamma}$, there is a vertex $v$ in the fundamental domain $K$ whose local group is $A_{\Gamma\setminus V}$. Furthermore, there is a face of $K$ of dimension $(\dim K -1)$ ``opposite'' to $v$ whose local group is $A_V$. Specifically, this face is determined by vertices associated to parabolic subgroups of the form $A_{\Gamma\setminus T}$ for $T\neq V$. Since $v$ is the only vertex of $K$ not contained in this face, $v$ is adjacent to every vertex in the face corresponding to $A_V$.
	
	Let the generators of $V$ be labeled $\{v_1, \dotsc, v_\ell\}$ and let $g = v_1 \dotsc v_\ell$. We will look at the translate of $K$ by this element $g$. By construction, $g$ setwise fixes the face of $K$ corresponding to $A_V$ and translates vertex $v$ (corresponding to $A_{\Gamma\setminus V}$) to the vertex $w = gv$ corresponding to $gA_{\Gamma\setminus V}$. Similarly to $v$, $w$ is connected to all vertices in the face of $K$ corresponding to parabolic subgroups of the form $A_{\Gamma\setminus T}$ for $T\neq V$. Hence the combinatorial distance between vertices $v$ and $w$ is 2. 
	
	In the case that $\widehat{\Gamma}$ has three or more connected components, any choice of vertex of $K$ on the face $A_V$ corresponds to a unique combinatorial geodesic between $v$ and $w$. By Theorem \ref{2-complete Artin cplx is systolic}, $\widehat{X_\Gamma}$ is systolic. Thus a subgroup that fixes two vertices must pointwise fix every combinatorial geodesic between them by Lemma \ref{systolic fixing}. Since $\text{Stab}(v) = A_{\Gamma\setminus V}$ and $\text{Stab}(w) = gA_{\Gamma\setminus V} g^{-1}$, every combinatorial geodesic between these vertices is fixed by $\text{Stab}(v) \cap \text{Stab}(w) = A_{\Gamma\setminus V} \cap gA_{\Gamma\setminus V} g^{-1}$. Hence $A_{\Gamma\setminus V} \cap gA_{\Gamma\setminus V} g^{-1}$ pointwise fixes all the vertices of $K$, so it must pointwise fix $K$ itself. The local group of $K$ is $A_\emptyset$ and thus has trivial stabilizer, so $A_{\Gamma\setminus V} \cap g A_{\Gamma\setminus V} g^{-1}$ must be trivial. 
	
	In the case that $\widehat{\Gamma}$ has exactly two connected components, $K$ is an edge, and the face of $K$ corresponding to $A_V$ is itself a vertex. In that case we have an edge path of length 2: $v = A_{\Gamma\setminus V}$ to $A_V$ to $w = gA_{\Gamma\setminus V}$. By Lemma \ref{2 conn comps 6 large}, $\widehat{X_\Gamma}$ is one-dimensional and 6-large. Thus this path of length 2 from $v$ to $w$ must be the unique combinatorial geodesic between $v$ and $w$, else $\widehat{X_\Gamma}$ would have a cycle of length 4. Since this geodesic is unique, it is fixed by $\text{Stab}(v)\cap \text{Stab}(w)$. Then as above,  $A_{\Gamma\setminus V} \cap g A_{\Gamma\setminus V} g^{-1}$ must pointwise fix $K$, and so the intersection must be trivial.

	Hence in both cases, $A_{\Gamma\setminus V}$ is weakly malnormal. Thus any subgroup of $A_{\Gamma\setminus V}$ is weakly malnormal, and any conjugate is also weakly malnormal. Hence if $P$ is a parabolic subgroup with a 2-completion that is not all of $A_\Gamma$, then $P$ is weakly malnormal. 
\end{proof}

\section{Acylindrical Hyperbolicity}\label{Acy Hyp Section}

	The following are theorems of Martin and Minasyan and Osin, respectively, which we will use to apply our result about weakly malnormal subgroups and show that certain locally reducible Artin groups are acylindrically hyperbolic. 

	\begin{thm}\cite[Theorem B]{martin_acy_hyp} \label{martin criterion}
		Let $X$ be a CAT(0) simplicial complex, with an action by simplicial isomorphisms of a group $G$. Assume there exists a vertex $v$ of $X$ such that: 
		\begin{enumerate}
			\item[(1)] The orbits of $G$ on the link $Lk_X(v)$ are unbounded for the associated angular metric. 
			\item[(2)] $Stab(v)$ is weakly malnormal in $G$. 
		\end{enumerate}
		Then $G$ is either virtually cyclic or acylindrically hyperbolic. 
	\end{thm}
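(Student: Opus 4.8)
The plan is to deduce acylindrical hyperbolicity from Osin's characterization \cite{osin_acy_hyp}: a group is acylindrically hyperbolic if and only if it is not virtually cyclic and admits an action on a Gromov-hyperbolic space possessing a loxodromic element satisfying the weak proper discontinuity (WPD) condition. Thus the goal reduces to producing, from the CAT(0) geometry near $v$, a single \emph{contracting} isometry $g \in G$ together with a hyperbolic space on which $g$ acts loxodromically and WPD-ly; the situation in which no such non-elementary configuration can be found will correspond exactly to the virtually cyclic alternative.

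To build $g$, I would exploit hypothesis (1). The angular metric on $Lk_X(v)$ records the directions at $v$ of geodesics issuing from $v$, and unboundedness of the $G$-orbits means one can transport a fixed direction to directions of arbitrarily large angular distance. Following the rank-one criterion for CAT(0) spaces (in the spirit of Caprace--Sageev \cite{caprace_sageev} and Ballmann--Buyalo), I would combine such elements by a ping-pong / North--South argument to obtain an element $g$ translating along a geodesic axis that does not bound a flat half-plane, i.e.\ a contracting (rank-one) isometry. The key geometric point is that two $G$-translates of the direction at $v$ that are nearly antipodal in the angular metric cannot be the endpoints of a flat strip, which forces the constructed axis to be contracting. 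Such a $g$ then acts loxodromically on an associated hyperbolic model built from the contracting geometry, as in Sisto \cite{sisto_contracting_elts}.

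It remains to verify WPD for $g$, and this is where hypothesis (2) enters, and where I expect the main difficulty to lie. The contracting property of the axis provides a bottleneck: any $h \in G$ that coarsely fixes a sufficiently long sub-segment of the axis must, up to bounded error and after multiplication by a suitable power of $g$, coarsely fix both the endpoint $v$ and a translate $g^{N}v$ of it. Since the action is simplicial and the relevant stabilizers are vertex stabilizers, such elements can be pinned down to lie in an intersection $\mathrm{Stab}(v) \cap h_0\,\mathrm{Stab}(v)\,h_0^{-1}$ for an appropriate $h_0 \in G$, which is finite by the weak malnormality hypothesis; this finiteness is precisely the WPD condition. The delicate step is upgrading the coarse statement ``$h$ almost fixes a long axis segment'' into the exact statement ``$h$ stabilizes the vertices $v$ and $g^{N}v$'', which requires using the contracting geometry together with the discreteness of the simplicial structure so that weak malnormality can actually be applied.

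With a contracting loxodromic WPD element in hand, the conclusion follows from the standard dichotomy \cite{osin_acy_hyp, sisto_contracting_elts}: $G$ is either virtually cyclic (precisely when the constructed loxodromic element generates a finite-index subgroup, i.e.\ the action is elementary) or acylindrically hyperbolic.
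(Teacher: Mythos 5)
This statement is quoted verbatim from \cite[Theorem B]{martin_acy_hyp}; the paper you are reading does not prove it, it only cites and applies it. So there is no internal proof to compare against, and your proposal has to be judged on its own. At the level of strategy you have correctly reconstructed the shape of Martin's argument: hypothesis (1) is used to manufacture a contracting (rank-one) isometry whose axis passes through the orbit of $v$, and hypothesis (2) is used to verify WPD for that isometry, after which the Osin/Sisto dichotomy gives ``virtually cyclic or acylindrically hyperbolic.''

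However, as written this is an outline rather than a proof, and the two places where you defer the work are exactly where the mathematical content lives. First, the passage from ``orbits on $Lk_X(v)$ are unbounded in the angular metric'' to a contracting axis is not justified by the remark that nearly antipodal directions at $v$ ``cannot be the endpoints of a flat strip'': a large angular distance between two directions at a single vertex says nothing yet about the existence of a hyperbolic isometry, let alone one whose entire axis fails to bound a half-flat; one must actually build a hyperbolic element (typically as a product of elements realizing large angles at $v$ and at a translate of $v$) and then prove a quantitative ``large angle at a cone point implies contraction'' lemma. Second, and more seriously for the use of hypothesis (2): weak malnormality only gives finiteness of the \emph{exact} intersection $\mathrm{Stab}(v)\cap h_0\,\mathrm{Stab}(v)\,h_0^{-1}$, whereas the WPD condition hands you elements that merely move $v$ and $g^N v$ by at most $\epsilon$. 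Upgrading ``moves $v$ by at most $\epsilon$'' to ``fixes $v$'' requires a uniform lower bound on how far a simplicial isomorphism can move a vertex it does not fix, which in turn needs some finiteness of the local geometry (finitely many isometry types of simplices, or an analogous discreteness statement); you name this as the delicate step but do not supply it, and without it the appeal to weak malnormality does not close. So the proposal identifies the right route but leaves genuine gaps at both pivotal steps.
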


	\begin{thm}\cite[Corollary 2.2]{minasyan_osin_acy_hyp_amalg} \label{acy_hyp_amalg}
		Let $G$ split as an amalgamated product: $G = A\ast_C B$ such that $A\neq C\neq B$ and $C$ is weakly malnormal in $G$. Then $G$ is either virtually cyclic or acylindrically hyperbolic.
	\end{thm}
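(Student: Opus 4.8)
The plan is to make the amalgam act on its \textbf{Bass--Serre tree} and then promote the weak malnormality of $C$ into a loxodromic element satisfying the WPD condition, after which Osin's characterization of acylindrical hyperbolicity applies. First I would form the Bass--Serre tree $T$ of the splitting $G=A\ast_C B$: its edges are the cosets $gC$ and its vertices the cosets $gA$ and $gB$, so the edge stabilizers are exactly the conjugates of $C$ and the vertex stabilizers are the conjugates of $A$ and $B$. Write $e$ for the base edge of $T$, whose stabilizer is $C$. Since $A\neq C\neq B$, both inclusions $C\hookrightarrow A$ and $C\hookrightarrow B$ are proper, whence every vertex of $T$ has valence at least $2$; thus $T$ is an unbounded simplicial tree and $G$ acts on it minimally, without inversions, and with no global fixed vertex. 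As a tree is $0$-hyperbolic, it then suffices by \cite{osin_acy_hyp} to establish the dichotomy: either $G$ is virtually cyclic, or the action of $G$ on $T$ is of general type and admits a loxodromic WPD element.

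Next I would dispose of the \emph{elementary} cases. Let $h\in G$ satisfy $|C\cap hCh^{-1}|<\infty$, as furnished by the weak malnormality of $C$, and note that $hCh^{-1}$ stabilizes the edge $he$. By the standard classification of isometric actions on trees, an action with no global fixed vertex either preserves a line, fixes a single end, or is of general type. If $T$ is a line (equivalently $[A:C]=[B:C]=2$), the kernel of the induced homomorphism $G\to\mathrm{Isom}(T)$ fixes every edge of $T$, in particular $e$ and $he$, so it is contained in $C\cap hCh^{-1}$ and is therefore finite; since the image lies in the infinite dihedral group, $G$ is (finite)-by-(virtually cyclic), hence virtually cyclic. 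In the end-fixing case I would argue analogously: the translation-length homomorphism toward the fixed end, together with the finiteness of the intersections $C\cap hCh^{-1}$, forces the relevant horocyclic kernel to be finite, again yielding that $G$ is virtually cyclic.

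The heart of the proof is the general-type case, where I would manufacture the WPD element directly from weak malnormality. Since $C$ and $hCh^{-1}$ are the stabilizers of the edges $e$ and $he$, the pointwise stabilizer of any segment of $T$ containing both $e$ and $he$ lies in $C\cap hCh^{-1}$ and is hence finite; let $\sigma$ denote the geodesic segment of $T$ joining $e$ to $he$. Using that the action is of general type, I would build by a ping-pong argument a loxodromic element $g$ whose axis $L$ contains a $G$-translate of $\sigma$ (which again has finite pointwise stabilizer). Because $L$ is tiled by the $\langle g\rangle$-translates of a fundamental segment, every sufficiently long subarc of $L$ contains a $G$-translate of $\sigma$ and so has finite pointwise stabilizer; for a simplicial tree action this is precisely the statement that $g$ is WPD. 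With a loxodromic WPD element in a general-type (hence non-elementary) action, \cite{osin_acy_hyp} gives that $G$ is acylindrically hyperbolic, completing the dichotomy.

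The main obstacle is the general-type step: converting the purely existential hypothesis ``$C\cap hCh^{-1}$ is finite for \emph{one} $h$'' into a single element whose \emph{entire} axis has uniformly finite segment-stabilizers. This hinges on two points that require care: (i) the ping-pong construction of a loxodromic whose axis can be steered through a prescribed finite-stabilizer segment, for which one exploits minimality and the abundance of independent hyperbolic elements in a general-type action; and (ii) the propagation of the finite-stabilizer property along the whole axis by periodicity. A secondary subtlety is the end-fixing elementary case, where weak malnormality is genuinely needed to rule out an infinite horocyclic kernel and thereby guarantee virtual cyclicity.
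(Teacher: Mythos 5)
First, a caveat on the comparison: the paper does not prove this statement at all --- it is imported verbatim as \cite[Corollary 2.2]{minasyan_osin_acy_hyp_amalg} --- so your attempt can only be measured against the Minasyan--Osin argument itself. Measured that way, your outline reproduces its skeleton faithfully: the Bass--Serre tree of the splitting, minimality and absence of a global fixed vertex from $A \neq C \neq B$, the observation that weak malnormality of $C$ produces two edges $e$ and $he$ whose pointwise pair stabilizer $C \cap hCh^{-1}$ is finite, the lineal case disposed of via a finite kernel mapping to a subgroup of the infinite dihedral group (and note both $e$ and $he$ do lie on the line, so your containment of the kernel in $C \cap hCh^{-1}$ is legitimate), and in the general-type case a loxodromic element whose axis passes through a translate of the segment joining $e$ to $he$, with $\langle g\rangle$-periodicity propagating the finite segment stabilizers along the axis, followed by Osin's WPD criterion. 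All of this is sound modulo standard tree arguments (e.g., the WPD set for $\varepsilon$ is covered by finitely many cosets of a finite segment stabilizer, one for each possible shift of magnitude at most $\varepsilon$ along the overlap), and it is essentially how Minasyan--Osin reduce their Corollary 2.2 to their Theorem 2.1.

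The genuine gap is your end-fixing case. In a quasi-parabolic action the kernel of the Busemann homomorphism is a directed union of elliptic subgroups, each fixing pointwise some ray toward the fixed end $\xi$; distinct kernel elements fix \emph{different} rays, so nothing places them all in the single finite group $C \cap hCh^{-1}$, and your claim that weak malnormality ``forces the relevant horocyclic kernel to be finite'' has no justification --- such kernels can be infinite, locally elliptic groups (as in Baumslag--Solitar actions on their trees), so no argument genuinely ``analogous'' to the lineal case exists. The correct repair, and what makes the reduction actually go through, is that this case is \emph{vacuous} for an amalgamated product: $A$ and $B$ are elliptic, and an elliptic subgroup fixing the end $\xi$ must fix pointwise the ray from its fixed vertex to $\xi$; since any two rays toward a common end share a subray, if $G$ fixed $\xi$ then $G = \langle A, B\rangle$ would fix a ray, hence a vertex, pointwise --- contradicting the existence of the hyperbolic element $ab$ for $a \in A\setminus C$, $b \in B\setminus C$, which is exactly what $A \neq C \neq B$ guarantees. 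With the fixed-end case eliminated outright rather than argued away, your lineal and general-type branches complete the dichotomy as intended.
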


	\theoremstyle{plain}
	\newtheorem*{acy hyp}{Theorem \ref{acy hyp}} 
	\begin{acy hyp} Let $A_\Gamma$ be an Artin group. If either
			\begin{enumerate} 
				\item $A_\Gamma$ is locally reducible and has a maximal finite-type subgroup which is dihedral and which has a 2-completion that is not all of $A_\Gamma$, or
				\item $A_\Gamma$ splits as an amalgamated product $A_{\Gamma_1} \ast_{A_{\Gamma_1 \cap \Gamma_2}} A_{\Gamma_2}$ such that $A_{\Gamma_1}$ is locally reducible and there is a 2-completion of $A_{\Gamma_1 \cap \Gamma_2}$ which does not contain all of $A_{\Gamma_1}$,
			\end{enumerate}
		then $A_\Gamma$ is acylindrically hyperbolic.
	\end{acy hyp}

	\begin{proof}
		Let $A_\Gamma$ an Artin group. 
		
		(1): We assume that $A_\Gamma$ is locally reducible and has a maximal finite-type subgroup which is dihedral and which has a 2-completion that is not all of $A_\Gamma$. Thus $\Gamma$ contains at least one edge, say $[a,b]$, with $m_{ab}\geq 3$, such that $A_{\{a,b\}}$ is a maximal finite-type standard parabolic subgroup (i.e., $A_{\{a,b\}}$ is not contained in a larger finite-type parabolic subgroup) and there is a 2-completion of $A_{\{a,b\}}$ that is not all of $A_\Gamma$.  
		
		Since $A_\Gamma$ is locally reducible, the Deligne complex $D_\Gamma$ associated to $A_\Gamma$ is CAT(0) by \cite{charney_tits_2000}. Consider $Lk_{D_\Gamma}(A_{\{a,b\}})$. There is a path in the link given by the sequence $\{g_i A_\emptyset \}_{i=0}^{\infty}$ where $g_i = ababa\dotsc$ is a word of length $i$. By \cite[Lemma 4.5]{vaskou_acy_hyp_2diml} the length of $g_i$ is unbounded, and by Theorem \ref{weakly malnormal}, $A_{\{a,b\}}$ is weakly malnormal. Hence by Theorem \ref{martin criterion}, $A_\Gamma$ is acylindrically hyperbolic. 
		
		(2): Now we assume that $A_\Gamma$ splits as an amalgamated product $A_\Gamma = A_{\Gamma_1} \ast_{A_{\Gamma_1\cap \Gamma_2}} A_{\Gamma_2}$ such that $A_{\Gamma_1}$ is locally reducible and there is a 2-completion of $A_{\Gamma_1 \cap \Gamma_2}$ which does not contain all of $A_{\Gamma_1}$. By Theorem \ref{weakly malnormal}, $A_{\Gamma_1 \cap \Gamma_2}$ is weakly malnormal in $A_{\Gamma_1}$, and hence is weakly malnormal in $A_\Gamma$. By Theorem \ref{acy_hyp_amalg}, $A_\Gamma$ is either virtually cyclic or acylindrically hyperbolic. Since $A_\Gamma$ is an amalgamated product, it is not virtually cyclic, hence it is acylindrically hyperbolic. 
	\end{proof}

	\begin{rmk}
		One may wonder whether the assumption that $\widehat{\Gamma}$ has two connected components is necessary. There are certainly counterexamples to acylindrical hyperbolicity when $\widehat{\Gamma}$ has one connected component. Take, for example, an Artin group which splits as a direct product. Then $\widehat{\Gamma}$ has one connected component, and $A_\Gamma$ is not acylindrically hyperbolic. 
	\end{rmk}

\section{Future Directions}\label{Future Directions Section}

We can ask whether the intersection of a collection of parabolic subgroups of a particular Artin group is itself a parabolic subgroup. This is another question that is open in general, but is conjectured to be true for all Artin groups. It has been proven to be true for several classes including finite-type Artin groups \cite{cumplido_et_al_2019}, right-angled Artin groups (Artin groups for which $m_{ij} = 2$ or $\infty$ for all $i,j$) \cite{duncan_kaz_rem_2007}, large-type Artin groups (Artin groups for which $m_{ij}\geq 3$ for all $i,j$) \cite{cmv_large_2023}, and 2-dimensional, (2,2)-free Artin groups \cite{blufstein_2022}. It is also known to be true for the intersection of two parabolic subgroups of an FC-type Artin group, when at least one of the parabolic subgroups is finite-type \cite{morris-wright_2021} and \cite{moller_paris_varghese_2022}. 

In many cases when the affirmative answer is known, the solution has come from the study of one or more geometric spaces with some property of non-positive curvature (e.g. hyperbolic, CAT(0), systolic). In this case, when stabilizers of vertices (or simplices, cubes, etc.) are parabolic subgroups, the geometric properties of the action help greatly in the study of their intersection. Examples include the Deligne complex and the Artin complex (see Section \ref{Deligne and Artin Complexes Section}).

\begin{question}
	For $A_\Gamma$ a locally reducible Artin group, is the intersection of an arbitrary collection of $2$-complete parabolic subgroups itself a parabolic subgroup? 
\end{question}

\newpage
\printbibliography

\end{document}